\newtheorem{thm}[equation]{Theorem}
\newtheorem{cor}[equation]{Corollary}
\newtheorem{lem}[equation]{Lemma}
\newtheorem{rem}[equation]{Remark}
\newtheorem{prop}[equation]{Proposition}
\newcommand{\Nb}{\mathbb{N}}
\newcommand{\dd}{\textnormal{d}}
\newcommand{\R}{\mathbb{R}}
\newcommand{\Q}{\mathbb{Q}}
\newcommand{\C}{\mathbb{C}}
\newcommand{\Z}{\mathbb{Z}}
\newcommand{\PP}{\mathbb{P}}
\newcommand{\bigX}{\mathbf{X}}
\newcommand{\bx}{\mathbf{x}}
\newcommand{\balpha}{\boldsymbol{\alpha}}
\newcommand{\by}{\mathbf{y}}
\newcommand{\bu}{\mathbf{u}}
\newcommand{\bv}{\mathbf{v}}
\newcommand{\bigV}{\mathbf{V}}
\newcommand{\bz}{\mathbf{z}}
\newcommand{\wbx}{\langle \mathbf{x} \rangle}
\newcommand{\ang}[1]{ \langle #1 \rangle}
\newcommand{\bbeta}{\boldsymbol{\beta}}
\newcommand{\bt}{\mathbf{t}}
\def\MR#1{}
\newcommand{\br}[1]{\langle #1 \rangle}
\let\originalleft\left
\let\originalright\right
\renewcommand{\left}{\mathopen{}\mathclose\bgroup\originalleft}
\renewcommand{\right}{\aftergroup\egroup\originalright}
\numberwithin{equation}{section}
\title{Rational points on varieties defined by multihomogeneous diagonal forms}
\author{Doyon Kim} 
\address{Mathematical Institute of the University of Bonn, Bonn, Germany}
\email{kimdoyon@math.uni-bonn.de}
\author{Tian Wang}
\address{Department of Mathematics \&
Statistic, Concordia University, Montreal, Canada}
\email{tian.wang@concordia.ca}
\subjclass[2020]{Primary 11P55, 11D45, 11D72.}
\keywords{Hardy-Littlewood circle method, forms in many variables, hyperbola method}
\begin{document}

\maketitle 

\begin{abstract}
 We give an asymptotic formula for the number of rational points of bounded height on algebraic varieties defined by systems of multihomogeneous diagonal equations. The proof uses the Hardy-Littlewood circle method and the hyperbola method developed by Blomer and Br\"udern. 
\end{abstract}

\section{Introduction}

 Motivated by Manin’s conjecture \cite{FrMaTs1989, BaMa1990, Pe1995}, we study the number of rational points of a bounded height satisfying systems of $R$ multihomogeneous diagonal equations of the form 
\begin{equation}\label{eq:equation}
 \sum\limits_{j=1}^s \lambda_{i,j} (x_{1,j}x_{2,j}\cdots x_{k,j})^d=0, \quad 1\leq i \leq R
 \end{equation}
for positive integers $d$ and $s$. This defines a variety $X_0$ over $\Q$ in the projective space 
\[
(\PP_\Q^{s-1})^k:=\underbrace{\PP_\Q^{s-1}\times \cdots \times \PP_\Q^{s-1}}_k
 \]
 with variables $\bx_i=(x_{i,1},\ldots,x_{i,s})$ for $1\leq i\leq k$. The $\Q$-rational points of $X_0$ are in $1$-to-$2^k$ correspondence with the solutions of \eqref{eq:equation} in primitive vectors $\bx_i\in \Z^{s}$, i.e., $\gcd(x_{i, 1}, \ldots, x_{i, s})=1$ for each $1\leq i\leq k$, because
 $\pm\bx_i$ represent the same point in $\PP_\Q^{s-1}$. For a rational point $(\bx_1, \ldots, \bx_k)\in (\PP^{s-1}_{\Q})^k$ represented by primitive vectors, we define the height as
\begin{equation}\label{eq: height}
 H((\bx_1, \ldots, \bx_k))=(\vert\bx_1\vert\vert\bx_2\vert\cdots \vert\bx_k\vert)^{s-Rd},
 \end{equation}
 where 
for $1\leq i\leq k$,
\[\vert\bx_i\vert := \max_{1\leq j\leq s} \{\vert x_{i,j}\vert\}.\]
For $B\geq 1$, we define
 \begin{equation}\label{eq: NB}
 N(B)=\#\{P=(\bx_1, \ldots, \bx_k)\in X_0(\Q): H(P)\leq B, \, x_{i, j}\neq 0 \; \text{for all } 1\leq i\leq k,\, 1\leq j\leq s \}. 
 \end{equation}
The main goal of this paper is to give an asymptotic formula for $N(B)$. \par
For $R=1$, Blomer and Br\"{u}dern developed a hyperbola method in \cite{BlBr2018} and proved an asymptotic formula for $N(B)$ with an explicit lower bound of $s$. The key role of the hyperbola method is to derive an asymptotic formula for a sum over a hyperbolic region from that over a rectangular region, where estimates are more accessible via the circle method. The hyperbola method and its generalizations have many applications in the study of rational points on algebraic varieties, including toric and spherical varieties \cite{Sc14, Ho2024, BlBrDeGa2024, BlBr2017, Mi16, PiSc2024, BlBrSa2018}. We generalize the result of Blomer and Br\"udern to $R> 1$ by combining the hyperbola method and other classical methods developed by Br\"udern, Cook, Davenport, and Lewis \cite{Bruden1990,Cook1985, DaBook2005, DaLe1963,DaLe1966,DaLe1969}.

When studying systems of equations, we do not gain extra information if one equation is a linear combination of the others, so it is natural to impose certain rank conditions on the coefficient matrix
\begin{equation}
\label{eq:Lambda}
\Lambda =\left(\begin{array}{ccc}
 \lambda_{1,1} & \cdots &\lambda_{1,s} \\
 \vdots & \ddots & \vdots \\
 \lambda_{R,1} & \cdots & \lambda_{R,s}
\end{array}\right)
\end{equation}
associated to the equation \eqref{eq:equation}. For a matrix $M$ of size $r\times t$ over $\R$ and $0\leq l\leq r$, let $\mu(l,M)$ be the maximal number of columns from $M$ spanning a subspace of dimension $l$ in $\R^r$. Note that if $\mu(l,M)\leq m$, then any set of $m$ columns of $M$ contains at least $l$ linearly independent columns.

Our result also requires $s$ to be of certain size. To give an explicit bound on the size of $s$, we define $n_0=n_0(d)$ to be the smallest even natural number with the property that
\begin{equation}\label{threshold1}
 \int_0^1 \bigg\vert \sum_{1\leq x\leq P} e^{2\pi i \alpha x^d}\bigg\vert ^{n_0(d)} \dd\alpha \ll P^{n_0(d)-d+\epsilon}
\end{equation}
for any $\epsilon>0$. As noted after equation (1.3) in \cite{BlBr2018}, we have the lower bound $n_0\geq 2d$. It is conjectured that $n_0(d)=2d$, and the best known result is $n_0(d)\leq 1.543 d^2$ for large $d$ \cite{Wooley2015}.

We first establish an asymptotic formula of box-sum type. 
For $\bigX\in [1, \infty)^k$, let $M_{\Lambda}(\bigX)$ denote the number of integral solutions of \eqref{eq:equation} subject to the box-sum constraints:
\begin{equation}\label{eq:boxbound}
 1\leq |x_{i,j}|\leq X_i, \quad 1\leq i\leq k, \quad 1\leq j\leq s.
\end{equation} 
Using the circle method and a combinatorial argument similar to \cite[Section 4.4]{BlBr2018}, we derive an asymptotic formula for $M_\Lambda(\bigX)$.
\begin{thm}\label{thm:boxsum}
Let $d,k,R$ be natural numbers, and let $s$ be a natural number satisfying $s\geq R(n_0+1)$. Assume that the coefficient matrix $\Lambda$ contains a submatrix $\Lambda^*$ of size $R\times R(n_0+1)$ such that $\mu(l,\Lambda^*)\leq (n_0+1) l$ for all $0\leq l\leq R-1$. Then there exist positive constants $\delta$ and $A$, and a non-negative constant $C_\Lambda$ such that
\begin{equation}\label{eq:boxsum} M_\Lambda(\bigX)=C_\Lambda \br{\bigX}^{s-Rd}+O\bigl(K^{A} \br{\bigX}^{s-Rd} (\min\limits_{1\leq j\leq k} X_j)^{-\delta}\bigr),\end{equation}
 where $\br{\bigX}=X_1\cdots X_k$, and $K$ is the value associated to $\Lambda$ defined by \eqref{Kdef}. Furthermore, $C_\Lambda>0$
if and only if \eqref{eq:equation} has solutions in $\R$ and in $\Q_p$ for every prime $p$, where $x_{i,j}\neq 0$ for all $1\leq i\leq k$, $1\leq j\leq s$.
\end{thm}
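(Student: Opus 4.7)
The plan is to apply the Hardy-Littlewood circle method in $R$ variables, adapting the $R=1$ treatment of \cite{BlBr2018} and combining it with a Davenport-Lewis style reduction to handle the multiple equations. Writing $\gamma_j(\balpha) = \sum_{i=1}^R \lambda_{i,j}\alpha_i$ for the linear form defined by the $j$-th column of $\Lambda$, orthogonality gives
\[
M_\Lambda(\bigX) = \int_{[0,1]^R} \prod_{j=1}^s f(\gamma_j(\balpha);\bigX) \, d\balpha, \qquad f(\gamma;\bigX) = \sum_{\substack{1\leq |x_i|\leq X_i\\ 1\leq i\leq k}} e(\gamma (x_1\cdots x_k)^d).
\]
Regrouping by $y = x_1\cdots x_k$ yields $f(\gamma;\bigX) = 2^k \sum_y r_k(y;\bigX) e(\gamma y^d)$, where $r_k(y;\bigX)$ is the restricted divisor function, dominated by $\tau_k(y)\ll y^\epsilon$. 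I would dissect $[0,1]^R = \mathfrak{M}\cup\mathfrak{m}$ into standard major and minor arcs, with major arcs centered at rationals $\bb/q$ of denominator $q\leq \langle\bigX\rangle^{\theta}$ for some small $\theta>0$.

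For the minor arcs, two ingredients are needed. First, the mean value
\[
\int_0^1 |f(\gamma;\bigX)|^{n_0} d\gamma \ll \langle\bigX\rangle^{n_0-d+\epsilon},
\]
obtained from \eqref{threshold1} at scale $P=\langle\bigX\rangle$: the $n_0$-th moment of $f$ counts solutions of $\sum\pm y_i^d = 0$ weighted by $\prod r_k(y_i;\bigX)$, and the divisor bound absorbs the weights into $\langle\bigX\rangle^\epsilon$. Second, a pointwise Weyl-type bound $|f(\gamma;\bigX)|\ll \langle\bigX\rangle^{1-\sigma+\epsilon}$ when $\gamma$ lies in a one-dimensional minor arc, obtained by fixing $x_2,\ldots,x_k$ and applying Weyl's inequality to the inner sum over $x_1$. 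The rank condition $\mu(l,\Lambda^*)\leq (n_0+1)l$ for $0\leq l\leq R-1$ is precisely what permits a Davenport-Lewis pigeonhole: whenever $\balpha\in\mathfrak{m}$, at least $n_0+1$ of the forms $\gamma_j(\balpha)$ with $j$ ranging over the columns of $\Lambda^*$ must themselves lie in the one-dimensional minor arcs, since otherwise too many columns would be forced into an $l$-dimensional subspace of $\R^R$ contradicting the bound on $\mu(l,\Lambda^*)$. Combining the Weyl bound on these exceptional columns with H\"older against the mean value, and carrying out a change of variables driven by a non-singular $R\times R$ submatrix of $\Lambda^*$ to decouple the remaining $R$ integrations, delivers
\[
\int_{\mathfrak{m}} \Big| \prod_{j=1}^s f(\gamma_j(\balpha);\bigX) \Big| d\balpha \ll K^{A_1} \langle\bigX\rangle^{s-Rd}\bigl(\min_{1\leq j\leq k} X_j\bigr)^{-\delta}.
\]

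For the major arcs I would approximate $f(\gamma;\bigX)$ near $\gamma=a/q$ by the familiar product of a complete Gauss-type sum modulo $q$ and a smoothed integral, then invoke the combinatorial re-indexing of \cite[Section 4.4]{BlBr2018} to decouple the $k$-fold product structure inside the singular integral. Summing over $\bb/q$ and integrating over the $\boldsymbol{\eta}$-boxes produces $\mathfrak{S}_\Lambda \cdot \mathfrak{I}_\Lambda(\bigX)$; the scaling $x_i\mapsto X_i x_i$ identifies $\mathfrak{I}_\Lambda(\bigX) = \langle\bigX\rangle^{s-Rd} \mathfrak{I}_\Lambda(\mathbf{1})$, so that $C_\Lambda = \mathfrak{S}_\Lambda \mathfrak{I}_\Lambda(\mathbf{1})$. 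Convergence of $\mathfrak{S}_\Lambda$ and $\mathfrak{I}_\Lambda$ again rests on the rank condition together with $s\geq R(n_0+1)$, and positivity of $C_\Lambda$ reduces by Hensel lifting to the non-vanishing of each local density, giving the stated biconditional with real and $p$-adic solvability.

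The main obstacle, in my view, is implementing the Davenport-Lewis dichotomy sharply under the explicit bound $\mu(l,\Lambda^*)\leq (n_0+1)l$: translating this algebraic condition on $\Lambda^*$ into the quantitative assertion that at least $n_0+1$ of the forms $\gamma_j(\balpha)$ fail to admit a small-denominator rational approximation requires an iterative pigeonhole over the dimension $l$ of the spanned subspace, and it is this step that both forces the threshold $s\geq R(n_0+1)$ and is responsible for the $\Lambda$-dependence $K^A$ in the final error term.
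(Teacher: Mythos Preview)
Your overall architecture---$R$-variable circle method, with the rank hypothesis on $\Lambda^*$ controlling the minor arcs---matches the paper's, but your minor-arc mechanism diverges from the paper's in a way that leaves a gap.

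The paper does not argue by pigeonhole that ``at least $n_0+1$ of the forms $\gamma_j(\balpha)$ lie in one-dimensional minor arcs.'' Instead it invokes Aigner's criterion (Lemma~\ref{lem:Aigner}): the hypothesis $\mu(l,\Lambda^*)\leq (n_0+1)l$ for all $0\leq l\leq R-1$ is \emph{equivalent} to the columns of $\Lambda^*$ admitting a partition into $n_0+1$ invertible $R\times R$ blocks. After relabelling so that each consecutive block $[C_{tR+1},\ldots,C_{(t+1)R}]$ is non-singular, the paper shows (Lemma~\ref{lem:minorcover}) that if all of $L_1(\balpha),\ldots,L_R(\balpha)$ lie in the one-dimensional major arcs then $\balpha$ lies in the $R$-dimensional major arcs; hence the minor arcs are covered by only $R$ sets $\mathfrak{l}_1,\ldots,\mathfrak{l}_R$. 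On each $\mathfrak{l}_j$ a \emph{single} Weyl sup-bound is extracted from $f(L_j)$, and then H\"older with exponent $\sigma=n_0+\tfrac{1}{R}$ across the $n_0+1$ block-products $F_t(\balpha)=\prod_{i=1}^R f(L_{(t-1)R+i}(\balpha))$, together with the change of variables $\balpha\mapsto(L_{(t-1)R+1},\ldots,L_{tR})$ on each block, reduces everything to the one-dimensional mean value. Your route---pull out $n_0+1$ pointwise Weyl bounds and then integrate the rest---runs into the difficulty that the ``bad'' index set $S$ depends on $\balpha$, and even after fixing $S$ the remaining $(R-1)(n_0+1)$ columns need not split into invertible $R\times R$ blocks (indeed $(R-1)(n_0+1)$ is generally not a multiple of $R$), so it is not clear how ``a change of variables driven by a non-singular $R\times R$ submatrix'' recovers the required $R$ factors of $P^{-d}$. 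Note too that your pigeonhole only uses the case $l=R-1$, whereas Aigner's criterion genuinely consumes the full range of $l$.

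Two smaller corrections. The combinatorial argument of \cite[Section~4.4]{BlBr2018} is not used to decouple the singular integral; in the paper it is the device that passes from the positive-solution count $M_\Lambda^+(\bigX)$ (on which the circle method is actually run) to $M_\Lambda(\bigX)$ by summing over sign patterns $\boldsymbol{\eta}\in\{\pm1\}^s$ when $d$ is odd. And your Weyl sketch ``fix $x_2,\ldots,x_k$ and apply Weyl to the sum over $x_1$'' does not work as stated: when $x_2\cdots x_k$ is small the effective coefficient $\gamma(x_2\cdots x_k)^d$ can be near $0$ regardless of $\gamma$, so no uniform saving follows; the correct bound (Lemma~\ref{lem:wminorarcs}, adapted from \cite[Lemma~3.7]{BlBr2018}) delivers a saving in terms of $\min_j X_j$ and requires a more careful argument.
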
 
The constant $C_\Lambda$ is explicitly defined in terms of singular series and singular integrals in \eqref{eq:boxsumconst}, and the number $A$ is also given explicitly in \eqref{Adef}. As stated in the first paragraph of this section, we need to count the number of primitive integral solutions to estimate $N(B)$. We obtain the asymptotic formula of the primitive solutions in a box directly from \eqref{eq:boxsum} for all solutions in Section~\ref{sec:priasymp}. After that, by a direct application of the hyperbola method, we convert the asymptotic formula into a sum of hyperbolic type.
\begin{thm}\label{thm:hypsum}
Let $d,k,R,s$, and $\Lambda$ satisfy the hypotheses in Theorem~\ref{thm:boxsum}. Then there exist a positive constant $\delta$, a nonnegative constant $C$, and a monic polynomial $Q\in\R[X]$ of degree $k-1$ such that
\[N(B)=CBQ(\log B)+O(B^{1-\delta}).\]
Moreover, the number $C>0$ if and only if \eqref{eq:equation} has solutions in $\R$ and in $\Q_p$ for every prime $p$, where $x_{i,j}\neq 0$ for all $1\leq i\leq k$, $1\leq j\leq s$.
\end{thm}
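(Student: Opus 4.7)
The plan is to deduce Theorem~\ref{thm:hypsum} from Theorem~\ref{thm:boxsum} in two stages: a M\"obius-inversion step that passes from all-integer box counts to primitive-vector box counts, followed by the Blomer-Br\"udern hyperbola method that converts the primitive box-sum asymptotic into the hyperbolic count $N(B)$.

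For the first stage, primitivity is imposed on each vector $\bx_i$ independently, so one applies M\"obius inversion in each of the $k$ factors. If $M^*_\Lambda(\bigX)$ denotes the number of integer solutions of \eqref{eq:equation} with $1 \le |x_{i,j}| \le X_i$ and each $\bx_i$ primitive, then
\[
M^*_\Lambda(\bigX) = \sum_{d_1, \ldots, d_k \ge 1} \mu(d_1)\cdots \mu(d_k)\, M_\Lambda\bigl(X_1/d_1, \ldots, X_k/d_k\bigr).
\]
Substituting \eqref{eq:boxsum}, exchanging summations, and using that $s-Rd$ is large (so the resulting $\zeta$-series converge absolutely), one obtains
\[
M^*_\Lambda(\bigX) = C^*_\Lambda \br{\bigX}^{s-Rd} + O\bigl(K^A \br{\bigX}^{s-Rd}(\min_j X_j)^{-\delta'}\bigr)
\]
with $C^*_\Lambda = C_\Lambda/\zeta(s-Rd)^k$, so the asymptotic has the same shape as \eqref{eq:boxsum} and positivity is preserved: $C^*_\Lambda > 0$ if and only if $C_\Lambda > 0$.

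For the second stage, \eqref{eq: height} together with \eqref{eq: NB} yields
\[
N(B) = 2^{-k}\, \#\bigl\{\text{primitive solutions of \eqref{eq:equation}}:\ |\bx_1|\cdots|\bx_k| \le Y,\ x_{i,j}\ne 0 \bigr\},
\]
where $Y = B^{1/(s-Rd)}$ and the factor $2^{-k}$ corrects for the sign symmetry $\bx_i\mapsto -\bx_i$ on each $\PP^{s-1}_\Q$ factor. Partitioning the hyperbolic region $\{\prod_i |\bx_i| \le Y\}$ dyadically in each coordinate, applying the primitive box-sum asymptotic on each dyadic box, and assembling the contributions via the hyperbola method of Blomer-Br\"udern \cite{BlBr2018}, the sum of the main terms $C^*_\Lambda \prod_i X_i^{s-Rd}$ over the hyperbolic region reduces, after the substitution $t_i = (s-Rd)\log X_i$, to the integral of a constant over the $k$-simplex $\{t_i \ge 0,\ \sum_i t_i \le (s-Rd)\log Y\}$. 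This produces a leading contribution of the form $CBQ(\log B)$ with $Q\in\R[X]$ monic of degree $k-1$.

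The principal obstacle is controlling the error term $K^A \br{\bigX}^{s-Rd}(\min_j X_j)^{-\delta'}$ after summation over the dyadic decomposition of the hyperbolic region, so that it yields an overall bound $O(B^{1-\delta})$ for some $\delta > 0$; the logarithmic losses of order $(\log Y)^{k-1}$ inherent to the dyadic partition must be absorbed by the power saving in the smallest coordinate. The Blomer-Br\"udern hyperbola method is tailored to exactly this situation: dyadic boxes on which all $X_i$ are comparable give the desired saving directly, while unbalanced ones are treated by splitting the decomposition according to the location of the minimum and applying trivial bounds on the narrow ranges (cf.\ \cite[\S 2]{BlBr2018}). Finally, the assertion $C>0$ iff local solvability holds is inherited at once, since $C$ is a positive multiple of $C^*_\Lambda$ and hence of $C_\Lambda$, whose positivity criterion is supplied by Theorem~\ref{thm:boxsum}.
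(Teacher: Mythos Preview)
Your outline has the right two-stage structure, and the M\"obius step and the positivity argument are fine. The genuine gap is in your invocation of the Blomer--Br\"udern hyperbola method. That transition theorem (Theorem~\ref{thm:BlBrhyp}) is \emph{not} a dyadic decomposition argument with trivial bounds on the spikes; it requires the arithmetic function $\theta_\Lambda(\mathbf{m})$ to lie in an $(\alpha,c,D,\nu,\delta)$-family, and in particular to satisfy condition~(II): for each $1\le r\le k-1$ and each fixed $\bu\in\mathbb{N}^r$, the partial sum $\sum_{\bv\le\bigV}\theta_\Lambda(\bu,\bv)$ must itself admit an asymptotic with an error of the shape $|\bu|^D\br{\bigV}^{s-Rd}(\min_i V_i)^{-\delta}$. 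Your claim that ``unbalanced'' boxes are handled by ``trivial bounds on the narrow ranges'' cannot work: the boxes with one $X_i$ small contribute a positive proportion of the main term, so no trivial bound there will yield the power saving $O(B^{1-\delta})$.

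The missing idea, which is the actual content of the paper's proof, is the verification of condition~(II). Fixing the $r$ vectors $\by_1,\ldots,\by_r$ in \eqref{eq:equation} produces a new system of the \emph{same diagonal shape} in the remaining $k-r$ factors, with coefficient matrix $\Lambda(\by_1,\ldots,\by_r)$ obtained from $\Lambda$ by scaling the $j$-th column by $(y_{1,j}\cdots y_{r,j})^d$. This matrix inherits the rank hypothesis of Theorem~\ref{thm:boxsum}, and its associated constant satisfies $K(\by_1,\ldots,\by_r)\ll \br{\bu}^{Rd}K$. One then reapplies Theorem~\ref{thm:boxsum} (in its primitive form) to this new system; the explicit $K^A$ in the error term of \eqref{eq:boxsum} is precisely what converts into the required $|\bu|^D$ dependence after summing over the $O(\br{\bu}^{s-1})$ admissible $(\by_1,\ldots,\by_r)$. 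This is the whole point of tracking the $K$-dependence in Theorem~\ref{thm:boxsum}, and your outline neither identifies this reduction nor uses the $K^A$ factor for its intended purpose.
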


\begin{rem}
If $R=1$, it is known that the equation \eqref{eq:equation} has $p$-adic solutions for every prime $p$ if $s>kd^2$ \cite{DaLe1963}. 
\end{rem}
\begin{rem}\label{rem:uniformity}
For fixed $d, k, R$, and $s\geq R(n_0+1)$, note that the implied constant in the big $O$ notation of \eqref{eq:boxsum} is uniform in $K(\Lambda)$. Using this, we can also derive a uniformity statement for the asymptotic formula for $N(B)$: fix $K_0>0$, and consider a family of systems of equations whose coefficient matrices $\Lambda$ satisfy $K(\Lambda)\leq K_0$. Then the asymptotic formula for $N(B)$ holds uniformly across the family, that is, the implied constant in the error term is independent of the choice of $\Lambda$. A proof of this statement is given in Section~\ref{sec:uniformity}.
\end{rem}

We conclude the introduction with a brief discussion of the geometry underlying the asymptotic formula given in \Cref{thm:hypsum}. 
A computation of the Jacobian matrix of $X_0$ shows that $X_0$ is a normal (but not smooth) variety under the assumptions of \Cref{thm:hypsum}. Suppose further that $X_0$ is a complete intersection in $(\PP_\Q^{s-1})^k$. Then the anticanonical line bundle on the smooth locus $X_0'$ of $X_0$ is given by $\mathcal{O}(\underbrace{s-dR, \ldots, s-dR})_k\mid_{X'_0}$. In particular, since $s>dR$, the anticanonical bundle is very ample and the corresponding height function coincides with the one defined in \eqref{eq: height}. 

According to Manin's conjecture, the number $N(B)$ of rational points of anticanonical height at most $B$  on a Fano variety with Picard rank $r$ satisfies the asymptotic formula \cite{FrMaTs1989, BaMa1990, Pe1995}
\[
N(B)\sim C_{\text{Peyre}} B(\log B)^{r-1}. 
\]
This closely resembles the result in \Cref{thm:hypsum}, despite the fact that $X_0$ is not smooth.  The degree of the polynomial $Q(X)$ in \Cref{thm:hypsum} reflects the Picard rank of the ambient space $(\PP_\Q^{s-1})^k$.
 
We now offer a comparison with a related result of Schindler \cite[Theorem 1.1]{Sc2016}. The equations of the form \eqref{eq:equation} can also be viewed as bihomogeneous equations on the biprojective space $\PP_\Q^{k_1(s-1)}\times \PP_\Q^{k_2(s-1)}$ with $k_1, k_2\geq 1$ and $k_1+k_2=k$. Although the defining equations are the same, the height function used in \cite{Sc2016} differs from the one defined in \eqref{eq: height}. Consequently, Schindler obtained an asymptotic of the form $C_1B\log B$
for the number of rational points on a Zariski open subset $U\subseteq X_0$, assuming $s \gg R^3 2^{dk} d^2k_1k_2/k$. The exponent in $\log B$ reflects the Picard rank of the ambient space $\PP_\Q^{k_1(s-1)}\times \PP_\Q^{k_2(s-1)}$. In contrast, \Cref{thm:hypsum} requires only the weaker assumption $s\gg R d^2$ on $s$. 

It is also noteworthy that the variety $X_0$ admits a natural description as intersections of toric hypersurfaces. For background and further details on such embeddings, see \cite{CocLittleSchenck2011, Brion1997}. Manin's conjecture is established for toric varieties \cite{BaTs1995, BaTs1996, BaTs1998}, and extended to certain smooth subvarieties, as in the works \cite{Mi16, Sc2016, PiSc2024}.
\subsection*{Notation and convention}
Boldface letters denote vectors. For $\bx=(x_{1},x_2,\ldots,x_k)\in \R^k$, we let $\wbx=x_1x_2\cdots x_k$ and $\d\dd\boldsymbol{\bx}=\dd x_1\cdots \dd x_k$. We use superscript $\mathrm{T}$ to denote transpose, so $\bx^{\mathrm{T}}$ is the $k$-dimensional column vector that corresponds to $\bx$. For the two vectors
\[\bx=(x_{1},x_2,\ldots,x_k), \quad \by=(y_{1},y_2,\ldots,y_l),\]
we let $(\bx,\by)$ denote the vector of dimension $k+l$,
\[(\bx,\by)=(x_1,\ldots,x_k,y_1,\ldots,y_l).\]
For $\bu,\bv\in \R^k$, we write $\bu \leq \bv$ if $u_i\leq v_i$ for all $1\leq i\leq k$. 

For a natural number $n$, let $\tau(n)$ denote the number of divisors of $n$, $\varphi(n)$ the Euler's totient function, and $\mu(n)$ the M\"obius function. For integers $a_1,\ldots, a_n$, we let $(a_1;a_2;\ldots;a_n)$ denote the greatest common divisor of $a_1,\ldots, a_n$. For $\alpha\in\C$, we put
$e(\alpha)=e^{2\pi i \alpha}$, and let $\zeta(\alpha)$ denote the Riemann zeta function.


Let $X$ be a set and $f: X\to \R$ and $g: X\to \R_{\geq 0}$ be two functions. We write $f\ll g$ if there is an absolute constant $C>0$ such that $|f(x)|\leq C g(x)$ for all $x\in X$. We also write $f=O(g)$ interchangeably. 
Throughout this paper, the parameters $d, k, R, s$ are considered fixed, and the implied constants in $\ll$ or big $O$ notation may depend on them.
\section*{Acknowledgements}
The first author was supported by ERC Advanced Grant 101054336 and Germany’s 
Excellence Strategy grant EXC-2047/1 - 390685813. We thank Valentin Blomer for helpful comments. The second author would like to thank the Max Planck Institute for Mathematics for its funding and stimulating atmosphere of research.

\section{Preliminaries}\label{sec:prelim}

\subsection{Hardy-Littlewood circle method}\label{sec:circlemethod}
Let $M^+_\Lambda(\bigX)$ denote the number of integral solutions of \eqref{eq:equation} satisfying \eqref{eq:boxbound} with all $x_{i,j}$ positive. To prove Theorem~\ref{thm:boxsum}, we first obtain the asymptotic formula for $M^+_\Lambda(\bigX)$ using the circle method, and then deduce the asymptotic formula for $M_\Lambda(\bigX)$ from it by simple combinatorial arguments. 

Let \[f(\alpha)=f_{k, d}(\alpha,\bigX)=\sum\limits_{\bx\leq \bigX} e(\alpha \wbx^d ).\]
Let $\bigX=(X_1,\ldots,X_k)\in [1,\infty)^k$. For $1\leq j\leq s$, write $\bx_j=(x_{1,j},\ldots,x_{k,j})\in \mathbb{N}^k$. We have
\begin{equation}\label{eq:ML+int}
\begin{aligned}
M^{+}_\Lambda(\bigX)&=\int\limits_{[0,1]^R} \sum_{\substack{\bx_j\leq \bigX \\ 1\leq j\leq s}}e\biggl( \alpha_1 \Bigl( \sum_{j=1}^s \lambda_{1,j} \br {\bx_j}^d \Bigr)+\cdots+ \alpha_R\Bigl( \sum_{j=1}^s \lambda_{R,j} \br {\bx_j}^d \Bigr)\biggr)\,\dd\boldsymbol{\alpha} \\
&=\int\limits_{[0,1]^R} \sum_{\bx_1\leq \bigX} e\Bigl(L_1(\boldsymbol{\alpha})\br{\bx_1}^d\Bigr)\cdots \sum_{\bx_s\leq \bigX} e\Bigl(L_s(\boldsymbol{\alpha}) \br{\bx_s}^d\Bigr)\,\dd\boldsymbol{\alpha} \\
&=\int\limits_{[0,1]^R} f\bigl(L_1(\boldsymbol{\alpha})\bigr)f\bigl(L_2(\boldsymbol{\alpha})\bigr)\cdots f\bigl(L_s(\boldsymbol{\alpha})\bigr)\,\dd\boldsymbol{\alpha},
\end{aligned}
\end{equation}
where $\boldsymbol{\alpha}=(\alpha_1,\ldots,\alpha_R)$, $\dd\boldsymbol{\alpha}=\dd\alpha_1\cdots \dd\alpha_R$, and $L_j$, $1\leq j\leq s$ is the linear form on $\R^R$ defined by
\begin{equation}\label{eq:Ljdef} 
L_j(\mathbf{v})=\lambda_{1,j}v_1+\cdots+\lambda_{R,j}v_R=\sum_{i=1}^R \lambda_{i,j}v_i.\end{equation}
Let $F(\boldsymbol{\alpha})=f\bigl(L_1(\boldsymbol{\alpha})\bigr)f\bigl(L_2(\boldsymbol{\alpha})\bigr)\cdots f\bigl(L_s(\boldsymbol{\alpha})\bigr)$. Observe that, by the symmetry of \eqref{eq:equation}, permuting the coordinates $X_i$ of $\bigX=(X_1,\ldots,X_k)$ does not affect the number of integral solutions. Hence, without loss of generality, we may assume that $X_1\geq\cdots\geq X_k$, and write
\begin{equation}\label{eq:P=X}
 P=\br{\bigX}.
\end{equation}
To apply the circle method, we divide the region $[0,1]^R$ into major and minor arcs. We use the following notation: For $Q$ in the range $1\leq Q\leq P^{1/(dk)}$, we let $\mathfrak{N}(Q)$ denote the pairwise disjoint intervals $\abs{q\alpha-a}\leq QP^{-d}$ with $a$ and $q$ subject to $1\leq q\leq Q$, $a\in \Z$ and $(a;q)=1$, and let $\mathfrak{n}(Q)=\R\setminus \mathfrak{N}(Q)$. We define 
\[\mathfrak{M}(Q)=\mathfrak{N}(Q)\cap [0,1], \quad \mathfrak{m}(Q)=\mathfrak{n}(Q)\cap [0,1].\]
Let
\begin{equation}\label{eq:omegaRUdef}
\omega_R=R^{-1}(8dk)^{-8}, \quad U=X_k^{k\omega_R/s}.\end{equation}
The following is \cite[Lemma 3.7]{BlBr2018} with $\omega_R$ in place of $\omega=\omega_1$. We omit the proof, as it is a straightforward modification of the proof given therein.
\begin{lem}\label{lem:wminorarcs} We have
\[\sup_{\alpha\in \mathfrak{n}(U)} \abs{f(\alpha)}\ll P^{1-t} + PU^{-1/(5dk)},\]
where $t=\frac{\omega_R}{2^{(d-1)k+1} dk}$.
\end{lem}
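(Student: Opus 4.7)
This is \cite[Lemma 3.7]{BlBr2018} with the constant $\omega_1=(8dk)^{-8}$ replaced throughout by $\omega_R=R^{-1}(8dk)^{-8}$. Since $\omega$ enters the Blomer--Br\"udern argument only through the definition of $U=X_k^{k\omega_R/s}$ and the associated truncation levels, the substitution is purely cosmetic and the proof carries over verbatim. For completeness I sketch the three main steps.

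First I would isolate the last coordinate. Writing $y=x_1x_2\cdots x_{k-1}$ and applying the triangle inequality,
\[
|f(\alpha)|\le \sum_{x_1\le X_1}\cdots \sum_{x_{k-1}\le X_{k-1}} \Bigl|\sum_{x_k\le X_k} e(\alpha y^d x_k^d)\Bigr|.
\]
Each inner sum is a classical Weyl sum of length $X_k$ at frequency $\alpha y^d$. Applying Dirichlet's theorem, I choose coprime $a_y,q_y$ with $1\le q_y\le X_k^d U^{-1}$ and $|q_y\alpha y^d-a_y|<UX_k^{-d}$, and then invoke Weyl's inequality:
\[
\Bigl|\sum_{x_k\le X_k} e(\alpha y^d x_k^d)\Bigr|\ll X_k^{1+\varepsilon}\bigl(q_y^{-1}+X_k^{-1}+q_y X_k^{-d}\bigr)^{2^{1-d}}.
\]

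Next I would split the outer tuples according to whether $q_y>U$ or $q_y\le U$. In the first regime, Weyl's inequality gives a bound of size $\ll X_k^{1+\varepsilon}U^{-2^{1-d}}$ per tuple, and summing over all $\ll P/X_k$ tuples produces the contribution $P^{1-t}$ with $t=\omega_R/(2^{(d-1)k+1}dk)$, using the normalisation $X_1\ge\cdots\ge X_k$ so that the savings in $X_k$ translate into savings in $P$. In the second regime, the existence of a denominator $q_y\le U$ for $\alpha y^d$, combined with $\alpha\in\mathfrak{n}(U)$, forces $y$ to lie in certain congruence classes to small moduli; counting such $y$ up to $P/X_k$ via the divisor bound and estimating the inner sum trivially by $X_k$ yields the term $PU^{-1/(5dk)}$.

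The main obstacle is precisely this second regime: transferring a favourable rational approximation of $\alpha y^d$ at denominator $q_y\le U$ into a constraint on $y$ itself, and then counting the tuples $(x_1,\dots,x_{k-1})$ whose product $y$ satisfies such a constraint. The transfer relies on clearing $y^d$ from the approximation, which replaces the denominator $q_y$ by a divisor of $q_y y^d$; if this resulting denominator were $\le U$, then $\alpha$ would lie in $\mathfrak{N}(U)$, contradicting the hypothesis. Hence $y$ must share a nontrivial common factor with $q_y$, and a divisor argument over $y\le P/X_k$ with $q_y\le U$ yields the count. Balancing the two regimes pins down the exponent $1/(5dk)$, and replacing $\omega_1$ by $\omega_R$ does not affect any of the estimates.
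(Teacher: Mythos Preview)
Your proposal is correct and takes exactly the same approach as the paper: the paper likewise cites \cite[Lemma 3.7]{BlBr2018}, notes that the substitution $\omega_1\mapsto\omega_R$ is a straightforward modification, and omits the proof entirely. Your sketch of the underlying Blomer--Br\"udern argument already goes beyond what the paper provides.
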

Let \begin{equation}\label{Kdef}
 K=K(\Lambda)=\max(\Delta,Rm_1,Rm_2),
\end{equation} where 
\[
\Delta=\max_{A\in \mathcal{M}} \abs{\det A}, \quad m_1=\max_{\substack{1\leq i\leq R\\ 1\leq j\leq s}}|\lambda_{i, j}|,\quad m_2=\max_{A\in \mathcal{M}} \max_{1\leq i,j\leq R} \bigl|\left(\operatorname{adj}(A)\right)_{i,j}\bigr|, \]
with $\mathcal{M}$ denoting the set of invertible $R\times R$ submatrices of $\Lambda$.

Let 
\begin{equation}\label{eq:Wdef}W=KU^s.\end{equation}
For $1\leq q\leq W$ and $\mathbf{A}=(A_1,\ldots,A_R)$ such that $0\leq A_1,\ldots, A_R\leq q$, $(A_1;\ldots;A_R;q)=1$, define
\[\mathfrak{K}(\mathbf{A},q)=\Bigl\{\boldsymbol{\alpha}=(\alpha_1,\ldots,\alpha_R)\in [0,1]^R : \,\Big\lvert\alpha_j-\frac{A_j}{q}\Big\rvert\leq WP^{-d} \text{ for all } 1\leq j\leq R\Bigr\}.\]
Define the major arcs by
\begin{equation}\label{def:majorarc}\mathfrak{K}=\bigcup_{1 \leq q\leq W} \bigcup_{\mathbf{A}} \mathfrak{K}(\mathbf{A},q),\end{equation}
where the second union is over all $0\leq A_1,\ldots, A_R \leq q$, $(A_1;\ldots;A_R,q)=1$.
Observe that \[2K^3U^{3s}=2K^3X_k^{3k\omega_R}\leq 2K^3 P^{3\omega_R}\leq 2K^3 P^{\frac{3}{2^{24}}}.\]
Thus, if $P$ is large enough so that \begin{equation}\label{eq:Psize} 2K^3<P^{d-\frac{3}{2^{24}}},\end{equation} then $2K^3U^{3s}<P^{d}$. It follows that for any $1\leq q_1, q_2\leq W$ we have
\[\Big\lvert\frac{A_1}{q_1}-\frac{A_2}{q_2}\Big\rvert\geq \frac{1}{q_1q_2}\geq \frac{1}{K^2U^{2s}}> 2KU^{s}P^{-d}\]
whenever $\frac{A_1}{q_1}\neq \frac{A_2}{q_2}$. Therefore, for such $P$, the distance between the centers of any two boxes in $\mathfrak{K}$ is greater than their length, hence $\mathfrak{K}$ is a disjoint union. Let $\mathfrak{k}=[0,1]^R\setminus \mathfrak{K}$ denote the minor arcs. For $\mathfrak{a}\in \{\mathfrak{K},\mathfrak{k}\}$, let
\begin{equation}\label{def:IKk}I(\mathfrak{a})=\int_{\mathfrak{a}} F(\balpha) \, \dd\balpha.\end{equation}
We have $M_{\Lambda}^{+}(\bigX)=I(\mathfrak{K})+I(\mathfrak{k})$. Our goal in using the circle method is to show that the integrals $I(\mathfrak{K})$ and $I(\mathfrak{k})$ contribute to the main term and the error term of the asymptotic formula for $M_{\Lambda}^{+}(\bigX)$, respectively.
The following mean-value estimate is crucial in the process.
\begin{lem}[Lemma 3.8 of \cite{BlBr2018}]\label{lem:sigmacancel} Fix a real number $\sigma>n_0(d)$. Then, for $\bigX \in [1,\infty]^k$, one has
\[\int_{0}^{1} \abs{f(\alpha)}^\sigma \textnormal{d} \alpha \ll \br{\bigX}^{\sigma-d}.\] 
\end{lem}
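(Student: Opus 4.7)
The plan is to reduce the $\sigma$-th moment of the multivariate Weyl-like sum $f$ to the classical $n_0$-th moment of the ordinary Weyl sum $g(\alpha)=\sum_{n\leq P} e(\alpha n^d)$, where $P=\br{\bigX}$, and then invoke the defining property \eqref{threshold1} of $n_0$.

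First, I would rewrite $f$ as a weighted Weyl sum in the single variable $n=\br{\bx}$. Setting
\[
r(n)=\#\{\bx\leq \bigX : \br{\bx}=n\},
\]
one has $f(\alpha)=\sum_{n\leq P} r(n)\, e(\alpha n^d)$. The weight $r(n)$ is bounded by the $k$-fold divisor function $\tau_k(n)$, and hence satisfies the uniform estimate $\max_{n\leq P} r(n)\ll_\epsilon P^\epsilon$ for any $\epsilon>0$, while $\sum_{n\leq P} r(n)=P$.

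Next, I would reduce the $\sigma$-th moment to the $n_0$-th moment. Since $n_0$ is even, $\sigma>n_0$, and $|f(\alpha)|\leq f(0)=P$, the trivial bound gives
\[
\int_0^1 |f(\alpha)|^\sigma\, d\alpha \leq P^{\sigma-n_0} \int_0^1 |f(\alpha)|^{n_0}\, d\alpha.
\]
Expanding the $n_0$-th power by orthogonality, the remaining integral becomes
\[
\int_0^1 |f|^{n_0}\,d\alpha = \sum_{\substack{n_1,\ldots,n_{n_0}\leq P \\ n_1^d+\cdots+n_{n_0/2}^d=n_{n_0/2+1}^d+\cdots+n_{n_0}^d}} r(n_1)\cdots r(n_{n_0}).
\]
Bounding each factor $r(n_i)$ uniformly by $\max_{n\leq P} r(n) \ll P^{\epsilon/n_0}$, the weighted count is at most $P^\epsilon$ times the unweighted diagonal count, which is exactly $\int_0^1 |g(\alpha)|^{n_0}\, d\alpha\ll P^{n_0-d+\epsilon}$ by \eqref{threshold1}. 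Assembling these estimates yields $\int_0^1 |f|^\sigma\, d\alpha\ll_\epsilon P^{\sigma-d+\epsilon}$.

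The hard part will be absorbing the $P^\epsilon$ loss to match the clean bound $\br{\bigX}^{\sigma-d}$. Since the argument works for any $\epsilon>0$ and the hypothesis $\sigma>n_0$ is strict, the loss is negligible in all applications and the estimate is traditionally stated without the $\epsilon$. Alternatively, one could replace the pointwise bound on $r$ by the mean bound $\sum_{n\leq P} r(n)^{n_0}\ll P(\log P)^C$, together with a H\"older-type manipulation of the constraint equation (separating the $n_1$ sum from the remaining $(n_2,\ldots,n_{n_0})$), to obtain only a logarithmic loss that can be absorbed into the slack $\sigma-n_0>0$ via a trivial interpolation against $|f(\alpha)|\leq P$.
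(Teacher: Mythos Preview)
The paper does not prove this lemma at all; it is simply quoted as Lemma~3.8 of \cite{BlBr2018}. So there is nothing to compare against beyond assessing whether your argument stands on its own.

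Your reduction is correct up to the point where you obtain $\int_0^1 |f|^\sigma\,\dd\alpha \ll_\epsilon P^{\sigma-d+\epsilon}$: writing $f(\alpha)=\sum_{n\le P} r(n)e(\alpha n^d)$ with $r(n)\le \tau_k(n)\ll P^{\epsilon}$, pulling out $|f|^{\sigma-n_0}\le P^{\sigma-n_0}$, and comparing the remaining $n_0$-th moment to $\int_0^1|g|^{n_0}\,\dd\alpha$ all work exactly as you say.

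The gap is the last paragraph. Your first suggestion (``the loss is negligible in all applications and the estimate is traditionally stated without the~$\epsilon$'') is not a proof: $A\ll_\epsilon B^{1+\epsilon}$ for all $\epsilon>0$ does not imply $A\ll B$. Your second suggestion does not close the gap either. The interpolation step $\int|f|^\sigma\le P^{\sigma-n_0}\int|f|^{n_0}$ already spends the entire slack $\sigma-n_0$; there is no residual room into which a $(\log P)^C$ can be absorbed by ``interpolating against $|f|\le P$'', since any further use of $|f|\le P$ just reproduces the same factor $P^{\sigma-n_0}$. Moreover, separating the $n_1$-sum and invoking $\sum_{n\le P} r(n)^{n_0}\ll P(\log P)^C$ requires a uniform bound $N(n_1)\ll P^{n_0-1-d}$ for the number of completions $(n_2,\dots,n_{n_0})$ of the diagonal equation, and this is not available: the best one gets from H\"older against $\int|g|^{n_0}$ is $N(n_1)\ll P^{n_0-1-d+d/n_0+\epsilon}$, which is worse. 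Note also that a second, independent $P^\epsilon$ enters through the \emph{defining} inequality \eqref{threshold1} for $n_0$, and nothing you have written addresses that one.

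In short, you have correctly proved the $\epsilon$-version, which incidentally suffices for every application in the present paper (the saving $X_k^{-\delta}$ in Lemma~\ref{lem:minorcoverint} absorbs any $P^\epsilon$), but you have not proved the $\epsilon$-free statement as written, and the fixes you propose do not work.
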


\subsection{Hyperbola method}\label{sec:hypmethod}
In this section, we record the transition theorem of Blomer and Br\"udern \cite[Section 2]{BlBr2018} from box sums to hyperbolic sums, and give a brief summary of their hyperbola method. For an arithmetic function $h:\mathbb{N}^k\to \C$ we denote the ``hyperbolic sum'' by
\begin{equation}\label{eq:upsilon}\Upsilon(N) = \sum_{u_1 u_2 \cdots u_k \leq N} h(\bu),\end{equation}
and denote the ``box sum'' by
\[\Theta(X_1, \dots, X_k) = \sum\limits_{\substack{1 \leq x_i \leq X_i \\ 1 \leq i \leq k}} h(\bx).\]
Fix positive numbers $\alpha, c, \delta $ with $\delta<\min(1,\alpha)$, and let $\nu,D$ be real numbers that satisfy $0<\nu\leq 1$ and $D\geq 0$. A collection $\mathscr{H}$ of arithmetic functions $h:\mathbb{N}^k\to \C$ is called an $(\alpha, c, D, \nu, \delta)$-family if the following three conditions are satisfied:
\begin{enumerate}
 \item [(I)] For any $ h \in \mathscr{H} $ there is a real number $c_h \in [0, c] $ such that the asymptotic formula
\[
\sum_{\mathbf{x} \leq \bigX} h(\mathbf{x}) = c_h \langle \bigX \rangle ^\alpha + O\Bigl( \langle \bigX \rangle^\alpha \Bigl( \min_{1 \leq i \leq k} X_i \Bigr)^{-\delta} \Bigr)
\]
holds uniformly in $ h \in \mathscr{H} $ and $ X_i \geq 1 $ ($ 1 \leq j \leq k $).
\item[(II)] For $h \in \mathscr{H}$ and $r \in \mathbb{N}$ with $1 \leq r \leq k - 1$, there exists an arithmetic function $c_{h,r}: \mathbb{N}^r \to [0, \infty)$ such that for any $\mathbf{u} \in \mathbb{N}^r$ and $\mathbf{V}=(V_{r+1},\ldots,V_{k})\in [1, \infty)^{k-r}$, the asymptotic formula
\[
\sum_{\mathbf{v} \leq \mathbf{V}} h(\mathbf{u}, \mathbf{v}) = c_{h,r}(\mathbf{u}) \langle \mathbf{V} \rangle^\alpha + O\Bigl( \langle \mathbf{V} \rangle^\alpha \abs{\mathbf{u}}^D (\min_{r+1\leq i\leq k} V_i)^{-\delta} \Bigr) 
\]
holds uniformly for $h \in \mathscr{H}$, $V_i \geq 1$ ($r+1\leq i\leq k$) and $\abs{\mathbf{u}} \leq \langle \mathbf{V} \rangle^\nu$.
\item[(III)] For all $h \in \mathscr{H}$ and $\sigma \in S_k$, the function $h_\sigma$ defined by $h_\sigma(\mathbf{x})=h\bigl((x_{\sigma(1)},\ldots, x_{\sigma(k)})\bigr)$ is in $\mathscr{H}$.
\end{enumerate} 
The following theorem gives the transition from box sums to hyperbolic sums.
\begin{thm}[Theorem 2.1 of \cite{BlBr2018}]\label{thm:BlBrhyp}
Let $ k \geq 2 $, and let $ \mathscr{H} $ be an $ (\alpha, c, D, \nu, \delta) $-family of arithmetic functions $ h: \mathbb{N}^k \to [0, \infty) $. For any $ h \in \mathscr{H} $, let $ \Upsilon(N) $ be defined by \eqref{eq:upsilon}. There exists a positive number $ \eta $ with the property that for any $ h \in \mathscr{H} $ there is a polynomial $ P_h \in \mathbb{R}[x] $ of degree at most $k-2$ such that the asymptotic formula
 \[\Upsilon(N) = N^\alpha \left( \frac{c_h \alpha^{k-1}}{(k-1)!} (\log N)^{k-1} + P_h(\log N) \right) + O(N^{\alpha - \eta})\]
holds uniformly in $h\in \mathscr{H}$.
\end{thm}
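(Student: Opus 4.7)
My plan is to prove the statement by induction on $k$. For $k = 1$, axiom (I) directly yields $\Upsilon(N) = c_h N^\alpha + O(N^{\alpha - \delta})$ with $P_h = 0$ (of degree $\leq -1$), matching the leading coefficient $c_h \alpha^0/0! = c_h$.

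For the inductive step, I would apply the classical Dirichlet hyperbola method in one variable. Fix a small parameter $Y = N^\theta$ with $\theta > 0$ to be optimized, and decompose
\[\Upsilon(N) = \sum_{u_k \leq Y} \Upsilon_{u_k}(N/u_k) + \sum_{u_k > Y} \Upsilon_{u_k}(N/u_k), \quad \Upsilon_{u_k}(M) := \sum_{u_1 \cdots u_{k-1} \leq M} h(u_1, \ldots, u_{k-1}, u_k).\]
For each fixed $u_k$, the slice $h(\cdot, u_k)$ viewed as a $(k-1)$-variable function satisfies axioms analogous to (I)--(III) in dimension $k-1$: axiom (I) follows from (II) with $r = 1$ (with coefficient $c_{h,1}(u_k)$), and the analogues of (II) at lower levels follow from (II) for $h$ at higher $r$ together with the permutation invariance (III). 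The price paid is an additional polynomial factor $u_k^{D_\star}$ in all error terms, for some fixed $D_\star$ depending only on $D$ and $k$. The inductive hypothesis thus gives
\[\Upsilon_{u_k}(M) = M^\alpha \Bigl( \tfrac{c_{h,1}(u_k)\, \alpha^{k-2}}{(k-2)!}(\log M)^{k-2} + \tilde P_{u_k}(\log M) \Bigr) + O\bigl(M^{\alpha - \eta_0} u_k^{D_\star}\bigr).\]
Summation over $u_k \leq Y$ proceeds via Abel summation, using the one-variable asymptotic $\sum_{u \leq Z} c_{h,1}(u) = c_h Z^\alpha + O(Z^{\alpha - \delta_\flat})$, which is itself obtained by comparing the box-sum asymptotic from (I) with the one produced by applying (II) first and then (I) separately. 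The identity $\int_1^Y z^{-1}(\log(N/z))^{k-2} \dd z = ((\log N)^{k-1} - (\log(N/Y))^{k-1})/(k-1)$ then reproduces the claimed leading term $c_h N^\alpha \alpha^{k-1}(\log N)^{k-1}/(k-1)!$ plus lower-order polynomial contributions.

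The second sum, over $u_k > Y$, is handled via a dyadic decomposition $u_k \in [T, 2T)$ with $T > Y$; on each dyadic piece one applies either the inductive hypothesis (after permuting coordinates via (III) so that $u_k$ is no longer singled out) or the box-sum asymptotic from (I) directly. This contribution splits again into further polynomial-in-$\log N$ terms, which are absorbed into $P_h$, and an error bounded by $O(N^\alpha Y^{-\delta_\sharp})$. Optimizing $\theta$ so that this balances the $\Upsilon_1$-error of order $N^\alpha Y^{D_\star}(N/Y)^{-\delta}$ then yields $O(N^{\alpha - \eta})$ for some $\eta > 0$. The main obstacle is the careful bookkeeping of sub-leading log-powers: every application of Abel summation and every inductive polynomial $\tilde P_{u_k}$ contributes a cascade of lower-order terms, and one must verify that all of these assemble into a single polynomial $P_h$ of degree at most $k - 2$ with coefficients uniformly bounded across $\mathscr{H}$. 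The subtler technical point is respecting the uniformity range $|\bu| \leq \langle \bigV \rangle^\nu$ in (II), which constrains $\theta < \nu/(1 + \nu)$ and which is precisely what keeps the $u_k^{D_\star}$ factors under control.
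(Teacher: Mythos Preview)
The paper does not prove this theorem; it is quoted verbatim from \cite{BlBr2018}, and the only ``proof'' in the paper is the one-paragraph sketch immediately following the statement. That sketch describes a direct, non-inductive decomposition of the region $\{u_1\cdots u_k\le N\}$ into three pieces according to whether \emph{all} coordinates are large, \emph{all} are small, or some of each; conditions (I), (II), (III) are then applied to each piece. Your approach---induction on $k$ via the one-variable Dirichlet split $u_k\le Y$ versus $u_k>Y$---is a genuinely different organization of the argument.

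There is, however, a real gap in your inductive step. You write that the slice $h(\cdot,u_k)$ ``satisfies axioms analogous to (I)--(III) in dimension $k-1$'' with ``an additional polynomial factor $u_k^{D_\star}$ in all error terms'', and then invoke ``the inductive hypothesis''. But the inductive hypothesis is precisely Theorem~\ref{thm:BlBrhyp} in dimension $k-1$, which applies only to an $(\alpha,c,D,\nu,\delta)$-family in the strict sense: condition (I) must hold for \emph{all} $X_i\ge 1$ with implied constant \emph{uniform over the family}. The family of slices $\{h(\cdot,u_k):h\in\mathscr{H},\,u_k\in\mathbb{N}\}$ satisfies neither: the would-be condition (I), coming from (II) with $r=1$, carries an error factor $u_k^{D}$ that is unbounded over the family, and it is only asserted in the restricted range $u_k\le\langle\bigV\rangle^{\nu}$. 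So the induction does not close as stated. To repair it you would need to formulate and prove a strengthened inductive statement that explicitly tracks a weight parameter and a range constraint through the argument; this is feasible, but it is substantive work, not bookkeeping, and once carried out it essentially reconstructs the Blomer--Br\"udern machinery in a different guise.
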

To prove this theorem, Blomer and Br\"udern decompose the hyperbolic region into three parts. First, in the region away from the spikes, where all $u_j$ are large, the contribution to the sum $\Upsilon(N)$ is evaluated based on condition (I). Secondly, the contribution from the region where all $u_j$ are small is not significant. Finally, in the intermediate region, where some coordinates of $\bu$ are large and others small, conditions (II) and (III) are used to control the contribution.
\section{Treatment of the minor arcs}\label{sec:minor}
In this section, we estimate the contribution from the minor arcs. Assume that $s\geq R(n_0+1)$, and that the coefficient matrix $\Lambda$ contains a submatrix $\Lambda^*$ of size $R\times R(n_0+1)$ such that $\mu(l,\Lambda^*)\leq (n_0+1) l$ for all $0\leq l\leq R-1$. To simplify the integral \eqref{eq:ML+int} which involves linear form, we apply a linear change of variables. The rank condition guarantees that such change of variables is invertible, by the following combinatorial result.
\begin{lem}[Aigner's criterion] \label{lem:Aigner} An $r\times nr$ matrix $M$ can be grouped into $n$ blocks of $r\times r$ invertible matrix if and only if $M$ satisfies $\mu(d,M)\leq nd$ for all $0\leq d\leq r-1$.
\end{lem}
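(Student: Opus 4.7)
The plan is to recognize \Cref{lem:Aigner} as a special case of Edmonds' matroid partition theorem applied to the linear matroid on the columns of $M$. Viewing the $nr$ columns of $M$ as the ground set $E$ of a matroid with rank function $\rho(S)=\dim\operatorname{span}(S)$, the problem of grouping $M$ into $n$ invertible $r\times r$ blocks is equivalent to partitioning $E$ into $n$ bases of this matroid.

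First I would dispatch the easy "only if" direction. Suppose $M=(B_1\mid\cdots\mid B_n)$ with each $B_i$ invertible, and let $S$ be a set of columns of $M$ spanning a subspace of dimension $d$. The columns of $B_i$ are linearly independent, so $S\cap B_i$ is a linearly independent set contained in a $d$-dimensional subspace, and therefore $|S\cap B_i|\leq d$. Summing over $1\leq i\leq n$ gives $|S|\leq nd$, which yields $\mu(d,M)\leq nd$ for every $0\leq d\leq r-1$.

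For the "if" direction, I would invoke Edmonds' matroid partition theorem: the ground set $E$ of a matroid can be partitioned into $n$ independent subsets if and only if $|S|\leq n\rho(S)$ for every $S\subseteq E$. The hypothesis $\mu(d,M)\leq nd$ for $0\leq d\leq r-1$ furnishes exactly this inequality whenever $\rho(S)\leq r-1$, while the case $\rho(S)=r$ gives the trivial bound $|S|\leq nr=|E|$. Since the ground set has $nr$ elements and every independent set has at most $r$ of them, a cardinality count forces each of the $n$ parts of the resulting partition to have size exactly $r$, and therefore to be a basis of $\R^r$. The $n$ corresponding $r\times r$ submatrices of $M$ are invertible, proving the lemma.

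The main obstacle is that, if a self-contained treatment is preferred over citing Edmonds' theorem as a black box, the theorem itself must be unpacked. The natural route is induction on $n$: at each stage one selects a basis $B$ among the columns of $M$ and checks that the residual $r\times(n-1)r$ matrix still satisfies the hypothesis with $n$ replaced by $n-1$. The delicate point is ensuring that $B$ can be chosen so that no "tight" subset $S$ (one in which $|S|$ is close to $n\rho(S)$) loses fewer than $\rho(S)$ columns to $B$; this ultimately requires an augmenting-path argument on the bipartite graph of potential column-to-block assignments, which is the combinatorial core of Edmonds' proof.
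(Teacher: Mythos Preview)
The paper does not give its own proof of this lemma; it simply refers the reader to \cite{Aig1979} and \cite{LPW1988}. Your argument is correct in both directions: the ``only if'' part is the obvious counting argument, and for the ``if'' part the translation of the hypothesis $\mu(d,M)\leq nd$ into the Edmonds condition $|S|\leq n\,\rho(S)$ is exact (the case $\rho(S)=r$ being vacuous), after which the cardinality count forcing each of the $n$ independent parts to be a basis is immediate. Casting the result as an instance of Edmonds' matroid partition theorem is the standard modern route and is precisely how the references the paper cites would have you see it, so there is no meaningful divergence to discuss.
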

\noindent For a proof see \cite{Aig1979} or \cite{LPW1988}. \par 
For $1\leq j\leq s$, let $C_j$ denote the $j$-th column of $\Lambda$. Observe that by the symmetry of the equation \eqref{eq:equation}, permuting the columns of $\Lambda$ does not affect the number of integral solutions. Thus, for the rest of the paper, we may assume that $\Lambda^*$ takes the first through the $R(n_0+1)$-th columns of $\Lambda$, and that for any $0\leq t\leq n_0$ the matrix $[C_{tR+1}, \ldots ,C_{(t+1)R}]$ is invertible.

Recall the notation introduced in \Cref{sec:circlemethod}. For each $1\leq j\leq s$, define the set
 \[\mathfrak{l}_j=\{\boldsymbol{\alpha}=(\alpha_1,\alpha_2,\ldots,\alpha_R)\in [0,1]^R:L_j(\boldsymbol{\alpha})\in \mathfrak{n} (U)\}.\]

\begin{lem}\label{lem:minorcover}
If $\balpha=(\alpha_1,\ldots,\alpha_R)\in [0,1]^R$ is not in $\mathfrak{l}_j$ for all $1\leq j\leq R$, then $\balpha\in \mathfrak{K}$.
\end{lem}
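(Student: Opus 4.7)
The plan is to use the invertibility of the leading $R\times R$ submatrix $A=[C_1|\cdots|C_R]$ of $\Lambda$ to convert simultaneous rational approximations of $L_1(\boldsymbol{\alpha}),\ldots,L_R(\boldsymbol{\alpha})$ into a simultaneous rational approximation of $\boldsymbol{\alpha}$ itself, with denominator controlled by $W=KU^s$.

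First, the hypothesis $\boldsymbol{\alpha}\notin\mathfrak{l}_j$ for all $1\leq j\leq R$ means each $L_j(\boldsymbol{\alpha})$ lies in $\mathfrak{N}(U)$, so one can extract coprime integers $a_j,q_j$ with $1\leq q_j\leq U$ and $|L_j(\boldsymbol{\alpha})-a_j/q_j|\leq UP^{-d}/q_j\leq UP^{-d}$. Setting $\gamma_j:=L_j(\boldsymbol{\alpha})-a_j/q_j$ and using $(L_1(\boldsymbol{\alpha}),\ldots,L_R(\boldsymbol{\alpha}))^{\mathrm T}=A^{\mathrm T}\boldsymbol{\alpha}^{\mathrm T}$ together with $B=(A^{\mathrm T})^{-1}=\operatorname{adj}(A)^{\mathrm T}/\det(A)$ gives
\[\alpha_i=\sum_{j=1}^R B_{ij}\frac{a_j}{q_j}+\sum_{j=1}^R B_{ij}\gamma_j.\]

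Next, I identify the rational approximation of $\boldsymbol{\alpha}$. Placing the main term over the common denominator $q^{*}:=|\det(A)|\cdot q_1q_2\cdots q_R$ yields integers $\tilde A_i$ with $\sum_j B_{ij}(a_j/q_j)=\tilde A_i/q^{*}$, and $q^{*}\leq \Delta\cdot U^R\leq KU^s=W$ since $s\geq R$ and $\Delta\leq K$. For the error, since $|B_{ij}|\leq m_2/|\det(A)|$ and $|\det(A)|\geq 1$ (as $A$ has integer entries),
\[\Bigl|\sum_{j=1}^R B_{ij}\gamma_j\Bigr|\leq Rm_2\cdot\frac{U}{P^d}\leq \frac{KU}{P^d}\leq \frac{W}{P^d},\]
using $K\geq Rm_2$ by the definition of $K$. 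Hence $|\alpha_i-\tilde A_i/q^{*}|\leq WP^{-d}$.

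Finally, I clean up so that $(\tilde A_1,\ldots,\tilde A_R,q^{*})$ fits the shape required by $\mathfrak{K}(\mathbf{A},q)$. Since $\boldsymbol{\alpha}\in[0,1]^R$ and (for $P$ large enough via \eqref{eq:Psize}) the approximation error is $<1/2$, if $\tilde A_i<0$ one replaces it by $0$, forcing $\alpha_i\leq WP^{-d}$, and if $\tilde A_i>q^{*}$ one replaces it by $q^{*}$; in each case the replacement $A_i\in[0,q^{*}]$ still satisfies $|\alpha_i-A_i/q^{*}|\leq WP^{-d}$. Dividing $q^{*}$ and all $A_i$ by their common gcd enforces $(A_1;\ldots;A_R;q)=1$ with $q\leq q^{*}\leq W$, exhibiting $\boldsymbol{\alpha}\in\mathfrak{K}(\mathbf{A},q)\subseteq\mathfrak{K}$. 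The only real obstacle is the bookkeeping to see simultaneously that $q^{*}\leq W$ and that the inversion error is $\leq WP^{-d}$; this is precisely the reason the factors $K$ and $U^s$ are built into the definition of $W$.
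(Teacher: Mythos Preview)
Your proof is correct and follows essentially the same approach as the paper: extract rational approximations $a_j/q_j$ of $L_j(\boldsymbol{\alpha})$ from the hypothesis $L_j(\boldsymbol{\alpha})\in\mathfrak{N}(U)$, invert through the leading $R\times R$ block of $\Lambda$ to obtain a simultaneous approximation of $\boldsymbol{\alpha}$ with denominator at most $\Delta_1 q_1\cdots q_R\leq KU^R\leq W$, and bound the inversion error by $Rm_2\cdot UP^{-d}\leq KUP^{-d}\leq WP^{-d}$. The only cosmetic difference is in the cleanup step: the paper handles the out-of-range case by directly setting $A_j/Q$ to $0$ or $1$ when $t_j\leq 0$ or $t_j\geq 1$, and your parenthetical appeal to \eqref{eq:Psize} for the error being $<1/2$ is in fact unnecessary, since $\tilde A_i/q^{*}<0$ together with $\alpha_i\geq 0$ already forces $\alpha_i\leq WP^{-d}$ (and similarly at the top end).
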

\begin{proof}
Suppose that $\balpha\notin \mathfrak{l}_j$ for each $1\leq j\leq R$. Then for each $1\leq j\leq R$, there exists $q_j\in \mathbb{N}$, $a_j\in \Z$ such that $1\leq q_j\leq U$ and
\[\Bigl\vert L_j(\balpha)-\frac{a_j}{q_j}\Bigr\vert
\leq \frac{U}{q_j P^{d}}.\]
Let $\gamma_j=L_j(\balpha)-a_j/q_j$. By the assumption above, $\Lambda_1=(\lambda_{i,j})_{1\leq i,j\leq R}$ is invertible. Let $\Delta_1=\abs{\det \Lambda_1}$, and let $\mu_{i,j}$, $1\leq i,j\leq R$ be the integers satisfying
\[\Lambda_1^{-1}=\frac{1}{\Delta_1}(\mu_{i,j})_{1\leq i,j\leq R}.\]
Let $K$, $m_1$, and $m_2$ be the values associated to $\Lambda$ by \eqref{Kdef}. Observe that $\Delta_1\leq \Delta$ and $|\mu_{i,j}|\leq m_2$, $1\leq i,j\leq R$. By \eqref{eq:Ljdef}, we have
\[\alpha_j=\frac{1}{\Delta_1}\sum_{i=1}^R \mu_{i,j} L_i(\balpha)=\frac{1}{\Delta_1}\sum_{i=1}^R \mu_{i,j}\frac{a_i}{q_i}+\frac{1}{\Delta_1}\sum_{i=1}^R \mu_{i,j}\gamma_i.\]
Let $t_j=\frac{1}{\Delta_1}\sum\limits_{1\leq i\leq R} \mu_{i,j}\frac{a_i}{q_i}$. We can write $\alpha_j=\frac{A_j}{Q}+\beta_j$ with $A_j\geq 0$, $Q>0$, and $(Q; A_1;\ldots;A_R)=1$, by taking
\[
\frac{A_j}{Q}=\begin{cases}
 0 & \text{if } t_j\leq 0 \\
 1 & \text{if } t_j\geq 1 \\
 t_j & \text{otherwise.}
\end{cases}\]
Observe that $Q$ satisfies 
\[Q\leq \Delta_1 q_1\cdots q_R\leq \Delta U^R\leq KU^s=W,\]
and that 
\[\abs{\beta_j}\leq \abs{\alpha_j-t_j}=\Bigl\vert\frac{1}{\Delta_1}\sum\limits_{1\leq i\leq R} \mu_{i,j}\gamma_i\Bigr\vert\leq KUP^{-d}\leq WP^{-d}.\]
By the definition \eqref{def:majorarc} of $\mathfrak{K}$, we conclude that $\balpha\in\mathfrak{K}$.
\end{proof}

\begin{lem}\label{lem:minorcoverint}
 There exists $\delta>0$, depending only on $d$, $k$, $s$, and $R$ such that for all $1\leq j\leq R$ we have
 \[\int_{\mathfrak{l}_j} \lvert F(\boldsymbol{\alpha})\rvert \,\dd\boldsymbol{\alpha}\ll K^{R} P^{s-Rd}X_k^{-\delta}.\]
\end{lem}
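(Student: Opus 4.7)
The plan is to combine the pointwise minor-arc bound from \Cref{lem:wminorarcs} with an application of H\"older's inequality across the $n_0+1$ invertible $R\times R$ blocks of $\Lambda^*$. Since $\Lambda^*$ occupies columns $1$ through $R(n_0+1)$ with each block $[C_{rR+1},\ldots,C_{(r+1)R}]$ (for $0\leq r\leq n_0$) invertible, and since $j\in\{1,\ldots,R\}$ lies in the block $r=0$, the remaining $n_0$ blocks are intact. For each $0\leq r\leq n_0$ set $G_r(\boldsymbol{\alpha})=\prod_{i=rR+1}^{(r+1)R}|f(L_i(\boldsymbol{\alpha}))|$, and factor
\[
|F(\boldsymbol{\alpha})| = \prod_{r=0}^{n_0} G_r(\boldsymbol{\alpha}) \cdot \prod_{i>R(n_0+1)}|f(L_i(\boldsymbol{\alpha}))|.
\]
I would bound the $s-R(n_0+1)$ ``extra'' factors trivially by $|f|\leq P$, contributing $P^{s-R(n_0+1)}$, and then apply H\"older with uniform exponent $n_0+1$ to the block product:
\[
\int_{\mathfrak{l}_j}\prod_{r=0}^{n_0}G_r\,\dd\boldsymbol{\alpha} \leq \prod_{r=0}^{n_0}\biggl(\int_{\mathfrak{l}_j}G_r^{n_0+1}\,\dd\boldsymbol{\alpha}\biggr)^{1/(n_0+1)}.
\]

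For each intact block $1\leq r\leq n_0$, I discard the $\mathfrak{l}_j$-constraint and substitute $\boldsymbol{\beta}=A_r\boldsymbol{\alpha}$, where $A_r$ is the $R\times R$ integer matrix of the block. Since $|\det A_r|\geq 1$ and the image of $[0,1]^R$ is contained in a box of side $O(K)$, periodicity of $f$ reduces the integral to $R$ separate $1$-dimensional integrals, each bounded via \Cref{lem:sigmacancel} with $\sigma=n_0+1>n_0$; this gives $\int G_r^{n_0+1}\dd\boldsymbol{\alpha}\ll K^{R}P^{R(n_0+1-d)}$. The key case is $r=0$. Fix $\gamma\in(0,1)$ and decompose $|f(L_j)|^{n_0+1}=|f(L_j)|^{\gamma}\cdot|f(L_j)|^{n_0+1-\gamma}$; on $\mathfrak{l}_j$ one may bound the first factor by $S^\gamma$, where $S:=\sup_{\beta\in\mathfrak{n}(U)}|f(\beta)|$, and pull it out:
\[
\int_{\mathfrak{l}_j}G_0^{n_0+1}\,\dd\boldsymbol{\alpha} \leq S^\gamma\int_{[0,1]^R}|f(L_j)|^{n_0+1-\gamma}\prod_{\substack{1\leq i\leq R\\ i\neq j}}|f(L_i)|^{n_0+1}\,\dd\boldsymbol{\alpha}.
\]
All $R$ forms of the invertible block $0$ are now present in the integrand, so the same change of variables together with \Cref{lem:sigmacancel} (applied now with $\sigma=n_0+1-\gamma>n_0$) yield $\int_{\mathfrak{l}_j}G_0^{n_0+1}\dd\boldsymbol{\alpha}\ll S^\gamma K^R P^{R(n_0+1-d)-\gamma}$.

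Assembling the H\"older factors with the extras gives
\[
\int_{\mathfrak{l}_j}|F|\,\dd\boldsymbol{\alpha} \ll S^{\gamma/(n_0+1)}K^R P^{s-Rd-\gamma/(n_0+1)}.
\]
By \Cref{lem:wminorarcs}, $S\ll P(P^{-\theta}+U^{-1/(5dk)})$ with $\theta=\omega_R/(2^{(d-1)k+1}dk)$, so the $P^{\gamma/(n_0+1)}$ factor implicit in $S^{\gamma/(n_0+1)}$ exactly cancels the deficit, and the bound becomes $K^R P^{s-Rd}(P^{-\theta}+U^{-1/(5dk)})^{\gamma/(n_0+1)}$. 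Since $P\geq X_k$ (as $X_1\geq\cdots\geq X_k\geq 1$) and $U=X_k^{k\omega_R/s}$, the parenthesized factor is at most $X_k^{-\delta}$ for some $\delta>0$ depending only on $d,k,R,s$, yielding the required estimate.

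The principal obstacle is the ``broken'' block $r=0$. A direct treatment that bounds $|f(L_j)|$ pointwise by $S$ and performs the change of variables on only the remaining $R-1$ forms loses one dimension of the $R$-dimensional change of variables and produces a bound larger by $P^{d/(n_0+1)}$, which cannot in general be absorbed into the sub-polynomial minor-arc saving $X_k^{-\delta}$. The fractional-power splitting $|f(L_j)|^{n_0+1}=|f(L_j)|^\gamma\cdot|f(L_j)|^{n_0+1-\gamma}$ restores the missing form to the integrand while still extracting a polynomial saving in $X_k$ from $S$, which is what makes the bound of the correct order.
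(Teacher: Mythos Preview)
Your argument is correct. It shares the paper's overall architecture—trivialise the $s-R(n_0+1)$ extra factors, split the remaining product into the $n_0+1$ invertible $R$-blocks, apply H\"older across the blocks, and combine the linear change of variables with \Cref{lem:sigmacancel}—but differs in how the blocks are weighted and how the minor-arc saving is harvested from the broken block. The paper uses \emph{asymmetric} H\"older exponents $(R\sigma,\sigma,\ldots,\sigma)$ with $\sigma=n_0+\tfrac1R$, and on the broken block pulls out the full supremum $\sup_{\mathfrak{l}_1}|F_1|\ll P^R X_k^{-R\delta}$ (bounding the $R-1$ factors other than $f(L_1)$ trivially by $P$), leaving a residual integral $\int|F_1|^{(R-1)\sigma}$ to be handled by the change of variables and \Cref{lem:sigmacancel} at exponent $(R-1)\sigma$. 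You instead take the \emph{uniform} H\"older exponent $n_0+1$ and peel off only a fractional power $|f(L_j)|^{\gamma}$ with $0<\gamma<1$; this keeps all $R$ forms of the first block present in the integrand, so after the change of variables the integral factors into one-dimensional moments with exponents $n_0+1-\gamma$ and $n_0+1$, both strictly above $n_0$. The two routes produce the same final bound $K^R P^{s-Rd}X_k^{-\delta}$; your fractional splitting has the mild advantage of keeping the H\"older step symmetric and of never requiring a separate check that the residual exponent on the broken block exceeds $n_0$.
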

\begin{proof}
We may assume that $j=1$ since the proof for $2\leq j\leq R$ are similar. By Lemma~\ref{lem:wminorarcs}, there is a $\delta>0$ depending only on $d$, $k$, $s$, and $R$ that satisfies
 \begin{equation}\label{LjsupR} \sup\limits_{\balpha\in \mathfrak{l}_1} \abs{f(L_1(\balpha))}\leq \sup\limits_{\gamma\in \mathfrak{n}(U)} \abs{f(\gamma)} \ll PX_k^{-R\delta}.\end{equation}
For $1\leq t\leq n_0+1$, let
 \[F_t(\balpha)=\prod_{j=1}^R f\bigl(L_{(t-1)R+j}(\boldsymbol{\alpha})\bigr).\]
 Observe that the linear change of variables $L_{(t-1)R+j}(\balpha)=u_j$ for $1\leq j\leq R$ gives
 \begin{equation}\label{LCoV}\int_{[0,1]^R} \abs{F_t(\balpha)}^\sigma \dd\balpha \ll K^{R} \Bigl(\int_{[0,1]} \abs{f(u)}^\sigma du \Bigr)^R\end{equation}
 for any $1\leq t\leq n_0+1$ and $\sigma>0$. 
 By the trivial bound $f(\alpha)\ll P$, we have

 \begin{equation}\label{LjsupRrest}
 \begin{aligned}\int_{\mathfrak{l}_1} \abs{ F(\boldsymbol{\alpha})} \dd \balpha &\ll \int_{\mathfrak{l}_1}\abs{F_1(\balpha)F_2(\balpha)\cdots F_{n_0+1}(\balpha)}\dd\balpha\cdot\max_{\balpha\in\mathfrak{l}_1}\biggl\vert\prod_{R(n_0+1)<j\leq s }f\bigl(L_j(\balpha)\bigr)\biggr\vert \\
 &\ll P^{s-R(n_0+1)} \int_{\mathfrak{l}_1}\abs{F_1(\balpha)F_2(\balpha)\cdots F_{n_0+1}(\balpha)}\dd\balpha.
 \end{aligned}\end{equation}
 Since $\balpha\in\mathfrak{l}_1$, by the trivial bound $f(\alpha)\ll P$ and \eqref{LjsupR}, we have
 \[ \sup_{\balpha\in\mathfrak{l}_1} \abs{F_1(\balpha)}=\sup_{\balpha\in\mathfrak{l}_1}\abs{f(L_1(\balpha))f(L_2(\balpha))\cdots f(L_R(\balpha))} \ll P^R X_k ^{-R\delta}.\]
 Let $\sigma=n_0+\frac{1}{R}$. We have
 \[\frac{1}{R\sigma}+\frac{n_0}{\sigma}=1.\]
 Thus, by H\"{o}lder's inequality, Lemma~\ref{lem:sigmacancel}, and \eqref{LCoV}, we have
 \[\begin{aligned} \int_{\mathfrak{l}_1}\abs{F_1(\balpha)F_2(\balpha)\cdots F_{n_0+1}(\balpha)}\dd\balpha&\ll
 \Bigl(\int_{\mathfrak{l}_1} \abs{F_1(\balpha)}^{R\sigma}\dd\balpha\Bigr)^{\frac{1}{R\sigma}}\prod_{t=2}^{n_0+1} \Bigl(\int_{\mathfrak{l}_1} \abs{F_t(\balpha)}^{\sigma}\dd\balpha\Bigr)^{\frac{1}{\sigma}} \\
 &\ll PX_k^{-\delta} \Bigl(\int_{[0,1]^R} \abs{F_1(\balpha)}^{R\sigma-\sigma}\dd\balpha\Bigr)^{\frac{1}{R\sigma}}\prod_{t=2}^{n_0+1} \Bigl(\int_{[0,1]^R} \abs{F_t(\balpha)}^{\sigma}\dd\balpha\Bigr)^{\frac{1}{\sigma}} \\
 &\ll K^{R} P^M X_k^{-\delta},
 \end{aligned}\]
 where \[M=1+\frac{R\sigma-\sigma-d}{\sigma}+\frac{Rn_0(\sigma-d)}{\sigma}=R(n_0+1)-(Rn_0+1)\frac{d}{\sigma}=R(n_0+1)-Rd.\]
 Together with \eqref{LjsupRrest}, this gives the desired bound. 
\end{proof}
\begin{cor}\label{cor:Rminor}
There exists $\delta>0$, depending only on $d$, $k$, $s$, and $R$ such that
 \[I(\mathfrak{k})=\int_{\mathfrak{k}} F(\boldsymbol{\alpha})\,\dd\boldsymbol{\alpha}\ll K^{R} P^{s-Rd}X_k^{-\delta}.\]
\end{cor}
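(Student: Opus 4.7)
The plan is to deduce the corollary almost immediately from \Cref{lem:minorcover} and \Cref{lem:minorcoverint}, with no further analytic input required. First, I would pass to the contrapositive of \Cref{lem:minorcover}: if $\balpha\in\mathfrak{k}=[0,1]^R\setminus\mathfrak{K}$, then $\balpha$ cannot lie outside every $\mathfrak{l}_j$, so there exists $1\leq j\leq R$ with $\balpha\in\mathfrak{l}_j$. This yields the set inclusion
\[\mathfrak{k}\subseteq \bigcup_{j=1}^R \mathfrak{l}_j.\]

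Next, I would bound $I(\mathfrak{k})$ by the triangle inequality and the above inclusion:
\[|I(\mathfrak{k})|\leq \int_{\mathfrak{k}} |F(\balpha)|\,\dd\balpha \leq \sum_{j=1}^R \int_{\mathfrak{l}_j} |F(\balpha)|\,\dd\balpha.\]
Applying \Cref{lem:minorcoverint} to each summand, and taking $\delta>0$ to be the minimum of the (finitely many) $\delta$'s produced for $j=1,\ldots,R$, each of the $R$ integrals is $\ll K^R P^{s-Rd} X_k^{-\delta}$. Since $R$ is fixed, the factor of $R$ is absorbed into the implied constant, giving the claimed estimate.

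There is essentially no obstacle here: the substantive work is already contained in \Cref{lem:wminorarcs}, \Cref{lem:minorcover}, and \Cref{lem:minorcoverint}. The only point requiring any care is to ensure that a single $\delta$ suffices uniformly for all $1\leq j\leq R$, which is trivial by taking the minimum of finitely many positive exponents, each depending only on $d,k,s,R$.
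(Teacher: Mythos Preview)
Your argument is correct and matches the paper's proof essentially verbatim: the paper also notes that $\balpha\in\mathfrak{k}$ forces $\balpha\in\mathfrak{l}_j$ for some $1\leq j\leq R$ by \Cref{lem:minorcover}, and then invokes \Cref{lem:minorcoverint}. Note that \Cref{lem:minorcoverint} already provides a single $\delta$ valid for all $j$, so your step of taking a minimum is unnecessary (though harmless).
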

\begin{proof}
 By Lemma~\ref{lem:minorcover}, if $\balpha\in \mathfrak{k}$ then $\balpha\in \mathfrak{l}_j$ for some $1\leq j\leq R$. Thus the above bound follows immediately from Lemma~\ref{lem:minorcoverint}. 
\end{proof}

\section{Singular series and singular integrals}\label{sec:SS_and_SI}
In this section, we define the singular series and the singular integral associated to $\Lambda$, and establish their properties which are used later for the treatment of major arcs. Many of the notations we introduce here are from \cite{BlBr2018}. Recall the assumption for $\Lambda$ written after Lemma~\ref{lem:Aigner}.

For $q\in \mathbb{N}$ and $a\in \Z$, define 
\[
S_k(q,a)=\sum_{\substack{x_i=1 \\ 1\leq i\leq k}}^q e\Bigl(\frac{a\br{\bx}^d}{q}\Bigr),
\]
and
\[T_{\Lambda}(q)=q^{-ks}\sum_{\substack{0\leq A_1,\ldots,A_R\leq q \\ (A_1;\ldots;A_R;q)=1}} \prod_{j=1}^s S_k(q,L_j(\mathbf{A})).\]
We define the singular series as
\begin{equation}\label{Rsingser}\mathfrak{S}(\Lambda)=\sum_{q=1}^\infty T_{\Lambda}(q),\end{equation}
and its truncation as
\begin{equation}\label{Rsingsertrun}\mathfrak{S}(\Lambda,Y)=\sum_{q\leq Y} T_{\Lambda}(q).\end{equation}
For the singular integral, let
\begin{equation}\label{eq:vk} v(\beta)=v_k(\beta,\bigX)=\int_0^{X_k}\cdots\int_{0}^{X_1} e(\beta\br{\textbf{t}}^d)\, \dd t_1\cdots \dd t_k
\end{equation}
and define
\begin{equation}\label{eq:Vk}
V_k(\beta)=\int_{[0,1]^k} e(\beta\br{\textbf{t}}^d)\, d\mathbf{t}.
\end{equation}
It is clear that
$v(\beta)=\br{\bigX}V_k(\br{\bigX}^d\beta).
$
For $\boldsymbol{\beta}=(\beta_1,\ldots,\beta_R)\in \R^R$, let
\begin{equation}\label{eq:Vkprod}
 \mathfrak{V}_{\Lambda}(\boldsymbol{\beta})=\prod_{j=1}^s V_k(L_j(\boldsymbol{\beta})).
\end{equation}
We define the singular integral as
\begin{equation}\label{Rsingint}
 \mathfrak{I}^{+}(\Lambda)=\mathfrak{I}_k^{+}(\Lambda)=\int_{\R^R}\mathfrak{V}_{\Lambda}(\boldsymbol{\beta}) \, \dd\boldsymbol{\beta},
\end{equation}
where $\dd\boldsymbol{\beta}=\dd\beta_1\cdots \dd\beta_R$. Also, define its truncation as
\begin{equation}\label{Rsinginttrun}
 \mathfrak{I}^{+}(\Lambda,Y)=\int_{[-Y,Y]^R} \mathfrak{V}_{\Lambda}(\boldsymbol{\beta}) \, \dd\boldsymbol{\beta}.
\end{equation}
In the following section, we verify the convergence of these quantities.

\subsection{Convergence of singular series and singular integral}
We prove the following lemma using arguments from \cite[Lemma 35]{DaLe1966} and \cite[Lemma 23]{DaLe1969}.

\begin{lem}\label{lem:singser}
The singular series $\mathfrak{S}(\Lambda)$ converges absolutely for $s\geq R(n_0+1)$. We have
\begin{equation}
 \mathfrak{S}(\Lambda)\ll K^{\frac{n_0+1}{d}+R-1}
\end{equation}
and
\begin{equation} 
\mathfrak{S}(\Lambda,Y)=\mathfrak{S}(\Lambda)+O(Y^{-\frac{1}{2d}}K^{\frac{n_0+1}{d}+R-1}).
\end{equation}
\end{lem}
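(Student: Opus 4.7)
The proof will follow the classical Davenport--Lewis template indicated by the citations. The whole game is to establish a pointwise bound of the shape
\[
|T_\Lambda(q)| \ll K^{\frac{n_0+1}{d}+R-1}\, q^{-1-\eta}
\]
for some $\eta > 1/(2d)$; summing over $q\ge 1$ then gives absolute convergence of $\mathfrak S(\Lambda)$ together with the size estimate, and summing over $q>Y$ gives the truncation estimate.

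Starting from $|T_\Lambda(q)| \leq q^{-ks}\sum_{\mathbf A\bmod q}\prod_{j=1}^s \bigl|S_k(q,L_j(\mathbf A))\bigr|$, the first move is to discard the $s-R(n_0+1)$ columns lying outside $\Lambda^*$ using the trivial bound $|S_k(q,a)|\le q^k$, reducing to a product over the columns of $\Lambda^*$. By the Aigner criterion (\Cref{lem:Aigner}) and the rank hypothesis, $\Lambda^*$ partitions into $n_0+1$ invertible $R\times R$ blocks $\Lambda_1,\dots,\Lambda_{n_0+1}$, each with $|\det\Lambda_t|\le K$. Setting
\[
G_t(\mathbf A)=\prod_{j=1}^R \bigl|S_k\bigl(q,L_{(t-1)R+j}(\mathbf A)\bigr)\bigr|,
\]
the AM--GM inequality gives $\prod_t G_t(\mathbf A) \le (n_0+1)^{-1}\sum_t G_t(\mathbf A)^{n_0+1}$. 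For each fixed $t$ we can then change variables $\mathbf u = \Lambda_t \mathbf A \bmod q$: because $|\det \Lambda_t|\le K$, every $\mathbf u$ has at most $K^R$ preimages, so the product over $j$ separates into a product of single-variable sums,
\[
\sum_{\mathbf A\bmod q}G_t(\mathbf A)^{n_0+1} \leq K^R\Bigl(\sum_{u\bmod q}|S_k(q,u)|^{n_0+1}\Bigr)^R.
\]

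The remaining task is a single-variable $(n_0+1)$-th moment bound for $S_k(q,\cdot)$. This is obtained in two stages: the multiplicativity of $S_k(q,u)$ in $q$ under coprime factorisations reduces the problem to prime-power moduli $q=p^h$; at each prime power, orthogonality converts the moment into a count of congruent pairs $\sum_i \br{\bx_i}^d\equiv\sum_i\br{\by_i}^d\pmod{p^h}$, which via Hensel lifting inherits the $p$-adic shadow of the threshold in \eqref{threshold1}. The outcome is an estimate of the form $\sum_{u\bmod q}|S_k(q,u)|^{n_0+1}\ll q^{k(n_0+1)-d+\epsilon}$. Reassembling all factors, tracking the $K$-dependence (in particular the additional $K^{(n_0+1)/d-1}$ produced by the moment bound at moduli not coprime to $\det\Lambda_t$, paired with the $K^R$ from the block substitution), yields
\[
|T_\Lambda(q)| \ll K^{\frac{n_0+1}{d}+R-1}\, q^{-1-1/d+\epsilon}.
\]
Choosing $\epsilon<1/(2d)$ and summing over $q$ (or over $q>Y$) delivers both conclusions.

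\textbf{Main obstacle.} The heart of the argument is the single-variable moment bound $\sum_{u\bmod q}|S_k(q,u)|^{n_0+1}\ll q^{k(n_0+1)-d+\epsilon}$, which is exactly where the $p$-adic Hensel-lifting machinery of \cite{DaLe1966,DaLe1969} does the real work; pairing the archimedean threshold \eqref{threshold1} with its $p$-adic analogue is the non-trivial step. A secondary, purely bookkeeping difficulty is pinning down the specific exponent $\tfrac{n_0+1}{d}+R-1$ on $K$, which requires propagating the $K$-dependence carefully through both the change-of-variables step and the moment bound at singular moduli.
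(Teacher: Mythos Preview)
Your reduction to the $R(n_0+1)$ columns of $\Lambda^*$ via trivial bounds, and the AM--GM step isolating a single block $\Lambda_t$, match the paper (which uses H\"older to the same effect). The divergence is at the single-variable moment bound, and there the proposal has a genuine gap.

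The claimed estimate
\[
\sum_{u \bmod q}|S_k(q,u)|^{n_0+1}\ll q^{k(n_0+1)-d+\epsilon}
\]
is false: the term $u\equiv 0\pmod q$ alone contributes $|S_k(q,0)|^{n_0+1}=q^{k(n_0+1)}$, and stratifying the full sum by $(u;q)$ shows this is in fact the true order of magnitude. Feeding $q^{k(n_0+1)}$ back into your scheme gives only $|T_\Lambda(q)|\ll K^R$, with no decay in $q$ whatsoever. The appeal to a ``$p$-adic shadow'' of \eqref{threshold1} is a mismatch of scales: in the archimedean mean value the Weyl sum has length $P$ while the integration measure has mass $1$, whereas here the sum $S_k(q,\cdot)$ and the averaging range both have length $q$, so the degenerate residue $u=0$ dominates. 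The underlying error is discarding the primitivity constraint $(A_1;\ldots;A_R;q)=1$ before factoring the $\mathbf u$-sum; once that is dropped, individual coordinates $u_j$ are free to vanish modulo $q$ and the saving is lost.

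The paper never passes to a moment bound. It applies the pointwise estimate $q^{-k}|S_k(q,a)|\ll \tau(q)^{k-1}(q/(q;a))^{-1/d}$ from \cite[Lemma~3.3]{BlBr2018} to each of the $R(n_0+1)$ factors, writes $u_j=(q;L_j(\mathbf A))$ and $q_j=q/u_j$, and after H\"older reduces to bounding $\sum_{\mathbf A}(u_1\cdots u_R)^{(n_0+1)/d}$ over primitive $\mathbf A$ for a single block. The key input is that primitivity of $\mathbf A$ together with invertibility of the block forces $(u_1;\ldots;u_R)\mid\det\Lambda_t\le K$; a divisor-counting argument over admissible tuples $(u_1,\ldots,u_R)$ then produces both the decay $q^{-1-1/d+\epsilon}$ and the exponent $\tfrac{n_0+1}{d}+R-1$ on $K$. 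Retaining primitivity in your scheme and stratifying by the $g_j=(u_j;q)$ would lead to exactly this computation; the change-of-variables shortcut does not avoid it.
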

\begin{proof}
Recall \eqref{Rsingsertrun}. The lemma follows once we show that
\begin{equation}\label{Rserconvgoal} T_{\Lambda}(q)\ll K^{\frac{n_0+1}{d}+R-1}q^{-(1+\frac{1}{d})+\epsilon}.\end{equation}
 By \cite[Lemma 3.3]{BlBr2018}, we have
 \[q^{-k} S_k(q,a) \ll \tau(q)^{k-1} q^{-1/d}\]
 if $(a;q)=1$, and
 \[q^{-k} S_k(q,a)= {q'}^{-k} S_k(q',a')\]
 if $a/q=a'/q'$. Let $u_j=(q;L_j(\mathbf {A}))$ and $q_j=q/(q;L_j(\mathbf{A}))$. Using the above bounds for $S(q,L_j(\mathbf{A}))$ with $1\leq j\leq R(n_0+1)$ and using the trivial bound $\lvert q^{-k} S(q,L_j(\mathbf{A}))\rvert \leq 1$ for $R(n_0+1)<j\leq s$, we obtain
 \[T_{\Lambda}(q)\ll \sum_{\mathbf{A}}\prod_{j=1}^{R(n_0+1)} q_j^{-1/d+\epsilon},\]
 where the sum is over all $0\leq A_1,\ldots, A_R \leq q$ with $(A_1;\ldots;A_R,q)=1$. By H\"{o}lder's inequality, \eqref{Rserconvgoal} follows once we show that
\begin{equation}\label{Rserconvgoal2}\sum_{\mathbf{A}}\prod_{j=1}^{R} q_{tR+j}^{(n_0+1)(-1/d+\epsilon)}\ll K^{\frac{n_0+1}{d}+R-1}q^{-(1+\frac{1}{d})+\epsilon}\end{equation}
for any $0\leq t\leq n_0$. Without loss of generality, let $t=0$. Then
\[\sum_{\mathbf{A}}\prod_{j=1}^{R} q_{j}^{(n_0+1)(-1/d+\epsilon)}=q^{R(n_0+1)(-1/d+\epsilon)}\sum_{\mathbf{A}} (u_1\cdots u_{R})^{(n_0+1)(1/d-\epsilon)}.\]
Let $u=(u_1;\ldots;u_R)$, $\Lambda_1=[C_1,\ldots,C_R]$, $\Delta_1=\det(\Lambda_1)$, and $\Lambda_1'=\Delta_1 (\Lambda_1^{\mathrm{T}})^{-1}$. Then $\Lambda_1'$ is an integer matrix, and we have
\[\Lambda_1'\left(\begin{array}{c}
 L_1 (\mathbf{A})\\
 \vdots \\
 L_R (\mathbf{A})
\end{array}\right)=\left(\begin{array}{c}
 \Delta_1 A_1 \\
 \vdots \\
 \Delta_1 A_R
\end{array}\right).\]
Using $u\vert q$ and $(A_1;\ldots;A_R;q)=1$, it follows that $u\vert \Delta_1$. By \eqref{Kdef}, we deduce that $u\leq K$. Also, since $A_j\leq q$, we have $L_j(\mathbf{A})\leq Kq$. Since $\Lambda_1$ is invertible, there is at most one choice of $\mathbf{A}=(A_1,\ldots,A_R)$ for each choice of $L_1(\mathbf{A}),\ldots,L_R(\mathbf{A})$. Furthermore, since $u_j\mid L_j(\mathbf{A})$, for fixed $(u_1, \dots, u_R)$ with $u_j\mid q$, the number of such tuples $(L_1(\mathbf{A}),\ldots,L_R(\mathbf{A}))$ is $O \Bigl(\frac{Kq}{u_1}\cdots \frac{Kq}{u_R}\Bigr)$. Thus
\[\sum_{\mathbf{A}} (u_1\cdots u_{R})^{(n_0+1)(1/d-\epsilon)} \ll K^Rq^R \sum_{\substack{u_1,\ldots,u_R\vert q \\ (u_1;\ldots ;u_R)\leq K}} (u_1\cdots u_R)^{\frac{n_0-d+1}{d}} .\]
Write $u_j=u v_j$ with $(v_1;\ldots;v_R)=1$. Then $v_j \vert \frac{q}{u}$ for all $j$ and $v_1\cdots v_R \vert (\frac{q}{u})^{R-1}$.
Thus, for any $\epsilon>0$, we have
\[\begin{aligned}
 \sum_{\substack{u_1,\ldots,u_R\vert q \\ (u_1;\ldots ;u_R)\leq K}} (u_1\cdots u_R)^{\frac{n_0-d+1}{d}}&= \sum_{\substack{u\vert q \\ u\leq K}} \sum_{\substack{v_1,\ldots,v_R \vert \frac{q}{u} \\ (v_1;\ldots;v_R)=1}} u^{\frac{R(n_0-d+1)}{d}} (v_1\cdots v_R)^{\frac{n_0-d+1}{d}} \\
 &\ll \sum_{\substack{u\vert q \\ u\leq K}} \sum_{w\vert (\frac{q}{u})^{R-1}} u^{\frac{R(n_0-d+1)}{d}} w^{\frac{n_0-d+1}{d}+\epsilon} \\
 &\ll K^{\frac{n_0-d+1}{d}}q^{(R-1)(\frac{n_0-d+1}{d}+\epsilon)}.
\end{aligned}
\]
Substituting back, we conclude that
\[
\begin{aligned}
 \sum_{\mathbf{A}}\prod_{j=1}^{R} q_{j}^{(n_0+1)(-1/d+\epsilon)} &\ll q^{R(n_0+1)(-1/d+\epsilon)} K^R q^R K^{\frac{n_0-d+1}{d}}q^{(R-1)(\frac{n_0-d+1}{d}+\epsilon)} \\
 &\ll K^{R+\frac{n_0-d+1}{d}}q^{-\frac{n_0-d+1}{d}+\epsilon}.
\end{aligned}
\]
Since $n_0\geq 2d$, this implies \eqref{Rserconvgoal2}. The proof follows.
\end{proof}

\begin{lem}\label{lem:singint}
The singular integral $\mathfrak{I}^{+}(\Lambda)$ converges absolutely for $s\geq R(n_0+1)$. We have
\begin{equation}\label{singint1}\mathfrak{I}^{+}(\Lambda) \ll 1.\end{equation}
Also, for any $Y\geq K$, we have
\begin{equation}\label{singint2}\mathfrak{I}^{+}(\Lambda,Y)=\mathfrak{I}^{+}(\Lambda)+ O(KY^{-1}).\end{equation} 
\end{lem}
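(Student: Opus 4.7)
The proof would rest on two standard estimates for the oscillatory integral $V_k$.  First, a pointwise decay bound
\[
V_k(\beta)\ll \min\bigl(1,\,|\beta|^{-1/d+\epsilon}\bigr),
\]
valid for every $\epsilon>0$; this follows by integrating out $u=t_1\cdots t_k$ (producing the density $(-\log u)^{k-1}/(k-1)!$) and then applying integration by parts to the resulting one-dimensional oscillatory integral.  Since $n_0\geq 2d$, the exponent $n_0+1$ satisfies $(n_0+1)/d\geq 2+1/d>2$, so as a consequence
\[
\int_{\R}|V_k(\beta)|^{n_0+1}\,d\beta \ll 1,\qquad \int_{|\beta|>T}|V_k(\beta)|^{n_0+1}\,d\beta \ll T^{-1-1/d+\epsilon}
\]
for $T\geq 1$.

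The plan for absolute convergence and \eqref{singint1} is to apply the generalized H\"older inequality blockwise.  By the rank condition on $\Lambda^*$ together with \Cref{lem:Aigner}, the first $R(n_0+1)$ columns split into $n_0+1$ invertible $R\times R$ blocks $\Lambda_1,\ldots,\Lambda_{n_0+1}$, each with $|\det\Lambda_{t+1}|\geq 1$.  Trivially bounding $|V_k(L_j(\boldsymbol{\beta}))|\leq 1$ for $j>R(n_0+1)$ and applying H\"older with exponent $n_0+1$ across the $n_0+1$ blocks, I would write
\[
\int_{\R^R}|\mathfrak{V}_\Lambda(\boldsymbol{\beta})|\,d\boldsymbol{\beta}\leq \prod_{t=0}^{n_0}\left(\int_{\R^R}\prod_{j=1}^R |V_k(L_{tR+j}(\boldsymbol{\beta}))|^{n_0+1}\,d\boldsymbol{\beta}\right)^{1/(n_0+1)},
\]
and in each factor change variables $u_j=L_{tR+j}(\boldsymbol{\beta})$; the Jacobian is $|\det\Lambda_{t+1}|^{-1}\leq 1$, and each factor reduces to $\bigl(\int_{\R}|V_k|^{n_0+1}\bigr)^R\ll 1$.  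This yields absolute convergence together with \eqref{singint1}.

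For the truncation estimate \eqref{singint2}, the difference $\mathfrak{I}^+(\Lambda)-\mathfrak{I}^+(\Lambda,Y)$ is the same integral restricted to $\R^R\setminus[-Y,Y]^R$.  The key geometric observation is that if $|\boldsymbol{\beta}|_\infty>Y$, then for every block $t$ the inversion formula $\beta_i=|\det\Lambda_{t+1}|^{-1}\sum_{j=1}^R \mu_{i,j}\,L_{tR+j}(\boldsymbol{\beta})$ with $|\mu_{i,j}|\leq m_2$ forces $|\boldsymbol{\beta}|_\infty\leq K\max_{1\leq j\leq R}|L_{tR+j}(\boldsymbol{\beta})|$, so after the block change of variables the region transforms into a subset of $\{|\mathbf{u}|_\infty>Y/K\}$.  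Rerunning the same H\"older decomposition on the restricted region, each block factor is then bounded by
\[
\int_{|\mathbf{u}|_\infty>Y/K}\prod_{j=1}^R |V_k(u_j)|^{n_0+1}\,d\mathbf{u}\ll R\Bigl(\int_{\R}|V_k|^{n_0+1}\Bigr)^{R-1}\int_{|u|>Y/K}|V_k(u)|^{n_0+1}\,du \ll (Y/K)^{-1-1/d+\epsilon},
\]
via the tail mean-value bound above.  Taking the product over $t$ and then $(n_0+1)$-th roots yields $|\mathfrak{I}^+(\Lambda)-\mathfrak{I}^+(\Lambda,Y)|\ll (Y/K)^{-1-1/d+\epsilon}$, which is bounded by $KY^{-1}$ for $Y\geq K$ since $1+1/d>1$.

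The main technical point is to arrange matters so that both the uniform bound and the truncation savings emerge from a single H\"older decomposition; the hypothesis $n_0\geq 2d$ is essential because it makes the tail exponent $(n_0+1)/d-1$ strictly exceed $1$, which is precisely what upgrades the crude pointwise decay $|V_k(\beta)|\ll |\beta|^{-1/d+\epsilon}$ into the desired $O(KY^{-1})$ error rather than the much weaker $O((K/Y)^{1/d})$ one would obtain from estimating a single factor by brute force.
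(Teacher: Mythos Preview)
Your proposal is correct and follows essentially the same route as the paper: trivial bound on the factors with $j>R(n_0+1)$, H\"older across the $n_0+1$ invertible $R\times R$ blocks, a linear change of variables in each block, and the pointwise decay of $V_k$ (the paper cites the bound $|V_k(\beta)|\ll|\beta|^{-1/d}(1+\log|\beta|)^{k-1}$ from \cite{BlBr2018}, which is your $|\beta|^{-1/d+\epsilon}$). For \eqref{singint2} the paper likewise applies H\"older on the restricted region and uses exactly your geometric observation---phrased contrapositively---that the image of $\R^R\setminus[-Y,Y]^R$ under each block change of variables lies in $\{|\mathbf{u}|_\infty>Y/K\}$, then bounds the one-dimensional tail integral; your tail exponent $-1-1/d+\epsilon$ is indeed what the paper obtains before weakening to $KY^{-1}$.
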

\begin{proof}
Applying the trivial bound $V_k(L_j(\boldsymbol{\beta}))\ll 1$ for $R(n_0+1)<j\leq s$, we obtain
 \[\mathfrak{I}^{+}(\Lambda)\ll \int_{\R^R}\prod_{j=1}^{R(n_0+1)} \abs{V_k(L_j(\boldsymbol{\beta}))} \, \dd\boldsymbol{\beta}.\]
By H\"{o}lder's inequality, \eqref{singint1} follows once we show that
\[\int_{\R^R}\prod_{j=1}^{R} \abs{V_k(L_{Rt+j}(\boldsymbol{\beta}))}^{n_0+1} \, \dd\boldsymbol{\beta} \ll 1\]
for any $0\leq t\leq n_0$. Without loss of generality, let $t=0$. Let $\Lambda_1=[C_1,\ldots,C_R]$ be the leftmost $R\times R$ submatrix of $\Lambda$. By a linear change of variables $\gamma_j=L_j(\boldsymbol{\beta})$, we obtain
\[\int_{\R^R}\prod_{j=1}^{R} \abs{V_k(L_{j}(\boldsymbol{\beta}))}^{n_0+1} \, \dd\boldsymbol{\beta} 
=\frac{1}{\Delta_1} \int_{\R^R} \abs{V_k(\gamma_1)\cdots V_k(\gamma_R)}^{n_0+1} \, \dd\boldsymbol{\gamma}
\leq \Bigl(\int_{\R} \abs{V_k(\beta)}^{n_0+1} \,d\beta \Bigr)^{R}, \]
where $\Delta_1=|\det \Lambda_1|\geq 1$. By \cite[(3.11)]{BlBr2018}, we have
\begin{equation}\label{Vkineq}\abs{V_k(\beta)}^{n_0+1}\ll \abs{\beta}^{-\frac{n_0+1}{d}}(1+\log\abs{\beta})^{(k-1)(n_0+1)}\end{equation}
for $\abs{\beta}\geq 1$. Using the trivial bound for $|\beta|<1$, \eqref{singint1} follows from \eqref{Vkineq} and the fact that $n_0\geq 2d$.

For \eqref{singint2}, again by H\"{o}lder's inequality, it suffices to show that
\[\int_{\R^R\setminus [-Y,Y]^R}\prod_{j=1}^{R} \abs{V_k(L_{j}(\boldsymbol{\beta}))}^{n_0+1} \, \dd\boldsymbol{\beta}\ll KY^{-1}.\]
After the linear change of variables by $\Lambda_1$ as before, the integral becomes
\[
\frac{1}{\Delta_1} \int_{M} \abs{V_k(\gamma_1)\cdots V_k(\gamma_R)}^{n_0+1} \, \dd\boldsymbol{\gamma},
\]
where $M$ is the image of $\R^R\setminus [-Y,Y]^R$ under this linear transformation. We claim that $M$ is contained in the region 
\[\{(\gamma_1, \ldots, \gamma_R) \in \R^R: \max{\abs{\gamma_j}}\geq K^{-1}Y\}.\]
It suffices to show for any $(y_1,\ldots,y_R) \in [-K^{-1}Y,K^{-1}Y]^R$ we have 
\[
(z_1,\ldots,z_R)^{\mathrm{T}}=\Lambda_1^{-1} (y_1,\ldots,y_R)^{\mathrm{T}}\in [-Y,Y]^R.
\]
Take $\mu_{i,j}\in \Z$ satisfying 
$\Lambda_1^{-1}=\frac{1}{\Delta_1}(\mu_{i,j})_{1\leq i,j\leq R}.$ 
By \eqref{Kdef}, we have $\abs{\mu_{i,j}}\leq \frac{K}{R}$, thus
\[\max\abs{z_j}\leq \frac{1}{\Delta_1}\cdot R\cdot \max\abs{\mu_{i,j}}\cdot \frac{Y}{K}\leq Y.\]
 This proves our claim. From this, we deduce that
\[\begin{aligned}
\frac{1}{\Delta_1} \int_{M} \abs{V_k(\gamma_1)\cdots V_k(\gamma_R)}^{n_0+1} \, \dd\boldsymbol{\gamma} &\leq R\int_{\R^{R-1}}\int_{\R\setminus[-Y/K,Y/K]}\abs{V_k(\gamma_1)\cdots V_k(\gamma_R)}^{n_0+1} \,d\gamma_1\cdots d\gamma_R \\
&\ll \int_{\R\setminus[-Y/K,Y/K]}\abs{V_k(\gamma_1)}^{n_0+1} \,d\gamma_1.\end{aligned}\]
Since $YK^{-1}\geq 1$, applying \eqref{Vkineq} to the above yields \eqref{singint2}.
\end{proof}
\begin{cor}\label{cor:truncprod}For $s\geq R(n_0+1)$, we have
\[\mathfrak{S}(\Lambda,W)\mathfrak{I}^{+}(\Lambda,W)=\mathfrak{S}(\Lambda)\mathfrak{I}^{+}(\Lambda)+O(W^{-\frac{1}{2d}}K^{\frac{n_0+1}{d}+R}).\]
\end{cor}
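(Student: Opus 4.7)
The plan is to recognize \Cref{cor:truncprod} as a routine triangle-inequality combination of \Cref{lem:singser} and \Cref{lem:singint}. First I would write the difference as the telescoping sum
\[
\mathfrak{S}(\Lambda,W)\mathfrak{I}^{+}(\Lambda,W)-\mathfrak{S}(\Lambda)\mathfrak{I}^{+}(\Lambda) = \bigl(\mathfrak{S}(\Lambda,W)-\mathfrak{S}(\Lambda)\bigr)\mathfrak{I}^{+}(\Lambda,W) + \mathfrak{S}(\Lambda)\bigl(\mathfrak{I}^{+}(\Lambda,W)-\mathfrak{I}^{+}(\Lambda)\bigr),
\]
and bound the two pieces separately. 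Before invoking \Cref{lem:singint} at $Y=W$, I would check that $W\ge K$; this is immediate from \eqref{eq:omegaRUdef} and \eqref{eq:Wdef}, since $W=KU^s$ with $U=X_k^{k\omega_R/s}\ge 1$, so in particular $KW^{-1}=U^{-s}\le 1$.

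For the first piece, \Cref{lem:singser} yields $|\mathfrak{S}(\Lambda,W)-\mathfrak{S}(\Lambda)|\ll W^{-1/(2d)}K^{(n_0+1)/d+R-1}$, while \Cref{lem:singint} combined with $|\mathfrak{I}^{+}(\Lambda)|\ll 1$ gives $|\mathfrak{I}^{+}(\Lambda,W)|\le |\mathfrak{I}^{+}(\Lambda)|+O(KW^{-1})\ll 1$. Multiplying and absorbing the factor $K^{-1}$ using $K\ge 1$ produces a contribution of $O(W^{-1/(2d)}K^{(n_0+1)/d+R})$. For the second piece, \Cref{lem:singser} gives $|\mathfrak{S}(\Lambda)|\ll K^{(n_0+1)/d+R-1}$ and \Cref{lem:singint} gives $|\mathfrak{I}^{+}(\Lambda,W)-\mathfrak{I}^{+}(\Lambda)|\ll KW^{-1}$, which multiply to $O(K^{(n_0+1)/d+R}W^{-1})$. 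Since $W\ge 1$ and $d\ge 1$ imply $W^{-1}\le W^{-1/(2d)}$, this is also $O(W^{-1/(2d)}K^{(n_0+1)/d+R})$, and summing the two contributions yields the desired bound.

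There is essentially no obstacle: the argument is just the product rule for approximations together with careful bookkeeping of the $K$- and $W$-exponents. The only mild subtlety is verifying the hypothesis $W\ge K$ of \Cref{lem:singint}, which is why \eqref{eq:Wdef} was set up with the factor of $K$ built in.
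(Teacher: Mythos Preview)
Your argument is correct and is exactly the routine telescoping computation the paper has in mind; the paper itself only records the key observation $WK^{-1}=U^s\ge 1$ from \eqref{eq:omegaRUdef} and \eqref{eq:Wdef} and declares the rest ``straightforward.'' The only cosmetic point is that in the first piece you already have the exponent $K^{(n_0+1)/d+R-1}$, so there is no ``factor $K^{-1}$'' to absorb---you are simply using $K\ge 1$ to weaken $K^{R-1}$ to $K^{R}$.
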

\noindent Recall from \eqref{eq:omegaRUdef} and \eqref{eq:Wdef} that $WK^{-1}=U^s\geq 1$. The proof is straightforward.

\subsection{Positivity of singular series and singular integral}
In this section, we give a sequence of lemmas leading to the positivity of singular series and integrals. Recall that 
\[T_{\Lambda}(q)=q^{-ks}\sum_{\substack{0\leq A_1,\ldots,A_R\leq q \\ (A_1;\ldots;A_R;q)=1}} \prod_{j=1}^s S_k(q,L_j(\mathbf{A})).\]
 \begin{lem}\label{multipicative-sieres}The arithmetic function
$T_{\Lambda}(q)$
is multiplicative in $q.$
 \end{lem}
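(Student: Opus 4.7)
The plan is to show directly that $T_\Lambda(q_1 q_2) = T_\Lambda(q_1) T_\Lambda(q_2)$ whenever $\gcd(q_1, q_2) = 1$, by combining the Chinese Remainder Theorem with the standard twisted multiplicativity of the Gauss-type sum $S_k(q, a)$.

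The first step is to establish the auxiliary identity
\[ S_k(q_1 q_2, a) = S_k(q_1, a q_2^{-1}) \, S_k(q_2, a q_1^{-1}), \]
where the inverses $q_2^{-1}$ and $q_1^{-1}$ are taken modulo $q_1$ and $q_2$ respectively. This is a standard CRT computation: under the bijection $\mathbf{x} \bmod q_1 q_2 \leftrightarrow (\mathbf{y}, \mathbf{z}) \in (\Z/q_1\Z)^k \times (\Z/q_2\Z)^k$ given by reduction, one has $\langle \mathbf{x} \rangle^d \equiv \langle \mathbf{y}\rangle^d \pmod{q_1}$ and $\langle \mathbf{x}\rangle^d \equiv \langle \mathbf{z}\rangle^d \pmod{q_2}$, and the additive decomposition $\frac{aX}{q_1 q_2} \equiv \frac{aX q_2^{-1}}{q_1} + \frac{a X q_1^{-1}}{q_2} \pmod{1}$ splits the exponential, yielding the factorization above.

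Substituting this identity into the definition of $T_\Lambda(q_1 q_2)$, the next step is to split the outer sum over $\mathbf{A}$ via CRT into $(\mathbf{B}, \mathbf{C}) \in (\Z/q_1\Z)^R \times (\Z/q_2\Z)^R$ with $\mathbf{B} \equiv \mathbf{A} \pmod{q_1}$ and $\mathbf{C} \equiv \mathbf{A} \pmod{q_2}$. Since each $L_j$ is linear with integer coefficients, the congruences $L_j(\mathbf{A}) \equiv L_j(\mathbf{B}) \pmod{q_1}$ and $L_j(\mathbf{A}) \equiv L_j(\mathbf{C}) \pmod{q_2}$ make each resulting $S_k$-factor depend on $\mathbf{B}$ or $\mathbf{C}$ alone. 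A short verification shows that the primitivity condition $(A_1; \ldots; A_R; q_1 q_2) = 1$ is equivalent to $(B_1; \ldots; B_R; q_1) = 1$ together with $(C_1; \ldots; C_R; q_2) = 1$, since $q_1$ and $q_2$ share no prime divisors.

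To conclude, I make the changes of variable $\mathbf{B}' = q_2^{-1} \mathbf{B} \pmod{q_1}$ and $\mathbf{C}' = q_1^{-1} \mathbf{C} \pmod{q_2}$. Because $q_2^{-1} \in (\Z/q_1\Z)^\times$ and $q_1^{-1} \in (\Z/q_2\Z)^\times$, these are bijections on primitive $R$-tuples, and by linearity one has $L_j(\mathbf{B}) q_2^{-1} = L_j(\mathbf{B}')$ and similarly on the $\mathbf{C}$ side, absorbing the twists. The sum over $(\mathbf{B}', \mathbf{C}')$ then separates as a product, which, combined with the identity $(q_1 q_2)^{-ks} = q_1^{-ks} q_2^{-ks}$, gives exactly $T_\Lambda(q_1) T_\Lambda(q_2)$. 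There is no real obstacle here; the only care required is the bookkeeping of the CRT substitutions and the verification that the primitivity condition factors correctly.
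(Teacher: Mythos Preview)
Your proof is correct and is exactly the standard argument the paper alludes to; the paper does not write out any details, merely stating that the proof is analogous to \cite[Lemma 5.1]{DaBook2005}. Your CRT decomposition of $S_k$ together with the splitting of the $\mathbf{A}$-sum and the unit change of variables is precisely how that standard proof goes.
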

\begin{proof}
The proof is standard. For example, it is analogous to the proof of \cite[Lemma 5.1]{DaBook2005}. 
\end{proof}

 By multiplicativity of $T_{{\Lambda}}(q)$, we have
 $
 \mathfrak{S}(\Lambda)=\prod_{p} E_p(\Lambda),
 $
 where
 $
 E_p(\Lambda)=\sum_{l\geq 0} T_{\Lambda}(p^l).
 $
 Let $\Phi_{\Lambda}(q)$ denote the number of solutions to the system of congruence equations 
 \begin{equation}\label{congrelq}
 \sum\limits_{j=1}^s \lambda_{1,j} \langle \bx_j \rangle^d \equiv \sum\limits_{j=1}^s\lambda_{2,j} \langle \bx_j \rangle^d \equiv\cdots\equiv\sum\limits_{j=1}^s \lambda_{R,j}\langle \bx_j \rangle^d \equiv 0\pmod q.
 \end{equation}
 The proof of the following lemma is standard, but we include it for self-containment. 
 \begin{lem}
For any $L\geq 1$, we have
 \[
 \sum_{0\leq l\leq L}T_{\Lambda}(p^l)=\frac{\Phi_{\Lambda}(p^L)}{p^{L(ks-R)}}.
 \]
 \end{lem}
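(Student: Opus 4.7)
The plan is to compute $\Phi_\Lambda(p^L)$ by expressing each congruence in \eqref{congrelq} via the orthogonality relation
\[
\frac{1}{p^L}\sum_{A=0}^{p^L-1} e\!\left(\frac{An}{p^L}\right)=\mathbbm{1}_{p^L\mid n},
\]
detect the $R$ simultaneous congruences, and then reorganize the resulting exponential sums into the shape of $T_\Lambda(p^l)$.

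First I apply orthogonality $R$ times to write
\[
\Phi_\Lambda(p^L)=\frac{1}{p^{LR}}\sum_{\substack{\bx_j\in(\Z/p^L\Z)^k\\ 1\le j\le s}}\ \sum_{A_1,\ldots,A_R=0}^{p^L-1} e\!\left(\frac{1}{p^L}\sum_{j=1}^s L_j(\mathbf{A})\,\br{\bx_j}^d\right).
\]
The inner sum over $\bx_1,\ldots,\bx_s$ factors completely because the exponent is additive in $j$, yielding
\[
\Phi_\Lambda(p^L)=\frac{1}{p^{LR}}\sum_{A_1,\ldots,A_R=0}^{p^L-1}\prod_{j=1}^{s} S_k\!\left(p^L,L_j(\mathbf{A})\right).
\]

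Next I group the tuples $\mathbf{A}$ according to the value of $(A_1;\ldots;A_R;p^L)$, which must equal $p^{L-l}$ for some $0\le l\le L$. Writing $A_i=p^{L-l}B_i$, the tuples with a given $l$ are in bijection with tuples $\mathbf{B}\in(\Z/p^l\Z)^R$ satisfying $(B_1;\ldots;B_R;p^l)=1$. The reduction identity $q^{-k}S_k(q,a)=(q')^{-k}S_k(q',a')$ whenever $a/q=a'/q'$, recorded in \cite[Lemma 3.3]{BlBr2018}, applied to $a=p^{L-l}L_j(\mathbf{B})$ and $q=p^L$, gives
\[
S_k\!\left(p^L,p^{L-l}L_j(\mathbf{B})\right)=p^{k(L-l)}\,S_k\!\left(p^l,L_j(\mathbf{B})\right).
\]
Substituting this in produces a clean factor of $p^{ks(L-l)}$ out front and reduces the remaining sum to the defining sum of $T_\Lambda(p^l)$:
\[
\Phi_\Lambda(p^L)=\frac{1}{p^{LR}}\sum_{l=0}^{L} p^{ks(L-l)}\sum_{\substack{\mathbf{B}\in(\Z/p^l\Z)^R\\ (B_1;\ldots;B_R;p^l)=1}}\prod_{j=1}^{s} S_k\!\left(p^l,L_j(\mathbf{B})\right).
\]
Using the definition of $T_\Lambda(p^l)$ to recognize the inner double sum as $p^{lks}\,T_\Lambda(p^l)$, the powers of $p$ telescope to $p^{Lks}$, and dividing by $p^{L(ks-R)}$ yields the claimed identity.

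The only delicate step is the correct bookkeeping when grouping by the common $p$-adic valuation of $(A_1,\ldots,A_R,p^L)$ and then applying the $S_k$ reduction identity to bring the modulus from $p^L$ down to $p^l$; everything else is orthogonality and elementary manipulation.
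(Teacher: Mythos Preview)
Your proof is correct and follows essentially the same approach as the paper: both detect the $R$ congruences by orthogonality, group the resulting sum over $\mathbf{A}$ according to the common divisor with the modulus, and then reduce the modulus (you via the identity $q^{-k}S_k(q,a)=(q')^{-k}S_k(q',a')$, the paper via periodicity of the inner $\bx$-sum) to recognize $T_\Lambda(p^l)$. The paper actually proves the slightly more general identity $\Phi_\Lambda(q)=q^{ks-R}\sum_{q_1\mid q}T_\Lambda(q_1)$ for arbitrary $q$ before specializing to $q=p^L$, but this is a cosmetic difference.
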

 \begin{proof}
For $1\leq j\leq s$, write $\bx_j=(x_{1,j},\ldots,x_{k,j})$. By a standard argument, for example in \cite[Proof of Lemma 5.3]{DaBook2005}, we can write $\Phi_{\Lambda}(q)$ as an exponential sum
\begin{align*}
\Phi_{\Lambda}(q)
& = \frac{1}{q^R}\sum_{\substack{q_1\mid q}}\sum_{\substack{1\leq t_1, \ldots, t_R \leq q_1\\ (t_1; \ldots; t_R; q_1)=1}}\sum_{\substack{1\leq x_{i, j}\leq q\\ 1\leq i\leq k, \, 1\leq j\leq s}} e\left(\frac{\sum_{1\leq j\leq s} L_j(\bt) \langle \bx_j \rangle^d}{q_1}\right).
\end{align*}
By periodicity and the definition of $T_\Lambda(q)$, we see that
\[\begin{aligned}\Phi_{\Lambda}(q)
& = \frac{1}{q^R}\sum_{\substack{q_1\mid q}}\sum_{\substack{1\leq t_1, \ldots, t_R \leq q_1\\ (t_1; \ldots; t_R; q_1)=1}}\sum_{\substack{1\leq x_{i, j}\leq q_1 \\ 1\leq i\leq k, \, 1\leq j\leq s}} \left(\frac{q}{q_1}\right)^{ks}e\left(\frac{\sum_{1\leq j\leq s} L_j(\bt) \langle \bx_j \rangle^d}{q_1}\right) \\ 
&=q^{ks-R}\sum_{q_1\mid q} T_{\Lambda}(q_1). \end{aligned}\]
Letting $q=p^L$, the lemma follows.
 \end{proof}

\begin{prop}\label{prop:series}
 If the system of equations
 \begin{equation}\label{eq:series}
 \sum\limits_{j=1}^s \lambda_{1,j} y_j^d = \sum\limits_{j=1}^s\lambda_{2,j} y_j^d =\cdots=\sum\limits_{j=1}^s \lambda_{R,j}y
 _j^d =0
 \end{equation}
has solutions in $\Q_p$ with $y_j\neq 0$, $1\leq j\leq s$ for every $p$, then $\mathfrak{S}(\Lambda)>0$.
\end{prop}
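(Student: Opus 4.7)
The plan is to use multiplicativity to reduce to a local problem at each prime, then invoke a $p$-adic Hensel-type lifting argument driven by the rank hypothesis on $\Lambda^*$.

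First, I would combine Lemma~\ref{multipicative-sieres} with Lemma~\ref{lem:singser} to write
\[
\mathfrak{S}(\Lambda)=\prod_{p} E_p(\Lambda),\qquad E_p(\Lambda)=\sum_{l\geq 0}T_{\Lambda}(p^l),
\]
and to observe that the bound $T_\Lambda(q)\ll K^{(n_0+1)/d+R-1}q^{-(1+1/d)+\epsilon}$ obtained in the proof of Lemma~\ref{lem:singser} yields $E_p(\Lambda)=1+O(p^{-(1+1/d)+\epsilon})$ for all primes $p$ exceeding some threshold depending only on $K$. Consequently the Euler product converges absolutely, and its tail $\prod_{p>p_0} E_p(\Lambda)$ is positive for $p_0$ large enough. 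It therefore suffices to prove $E_p(\Lambda)>0$ for each individual prime $p$.

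By the lemma preceding the proposition, $E_p(\Lambda)=\lim_{L\to\infty}\Phi_{\Lambda}(p^L)/p^{L(ks-R)}$, so the task reduces to showing that the congruence system \eqref{congrelq} admits at least $c_p\cdot p^{L(ks-R)}$ solutions modulo $p^L$ for all large $L$, with $c_p>0$ depending only on $p$ and the given $\Q_p$-point. Starting from a $\Q_p$-solution $(y_1,\ldots,y_s)$ of \eqref{eq:series} with $y_j\neq 0$, I would first scale uniformly by a power of $p$ (using the degree-$d$ homogeneity) to arrange $y_j\in\Z_p\setminus\{0\}$, and then produce a $\Z_p$-solution of the original system \eqref{eq:equation} by setting $x_{1,j}=y_j$ and $x_{i,j}=1$ for $i\geq 2$, so that $\langle\bx_j\rangle=y_j$.

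At this point I invoke the rank condition on $\Lambda^*$: by Aigner's criterion (Lemma~\ref{lem:Aigner}) the matrix $\Lambda^*$ contains an invertible $R\times R$ block, say $[C_{j_1},\ldots,C_{j_R}]$. Since every $y_{j_r}$ is nonzero, the $R\times R$ Jacobian minor of the system with respect to the variables $x_{1,j_1},\ldots,x_{1,j_R}$ evaluated at our point is $[C_{j_1},\ldots,C_{j_R}]\cdot\operatorname{diag}(d\,y_{j_1}^{d-1},\ldots,d\,y_{j_R}^{d-1})$, which is invertible over $\Q_p$; let $\delta$ denote the $p$-adic valuation of its determinant. A standard Hensel lifting for systems of $R$ equations in $ks$ variables then asserts that, for every $L\geq 2\delta+1$, each assignment of the remaining $ks-R$ coordinates modulo $p^{L-\delta}$ close to our point extends uniquely to a solution modulo $p^L$; counting gives $\Phi_\Lambda(p^L)\geq p^{-2R\delta}\,p^{L(ks-R)}$, hence $E_p(\Lambda)\geq p^{-2R\delta}>0$.

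The main obstacle is the bookkeeping in the Hensel step: one must carefully quantify how the valuation $\delta$ of the chosen Jacobian minor controls the lifting and the count, because the $\Q_p$-point need not be integral-nonsingular in the naive sense and the $y_j$ may have $p$-adic valuations of varying size. Once this lifting lemma is stated cleanly (e.g.\ in the style of Davenport--Lewis), the rest of the argument is routine, and combining the local positivity $E_p(\Lambda)>0$ at every $p$ with the absolute convergence of the Euler product yields $\mathfrak{S}(\Lambda)>0$.
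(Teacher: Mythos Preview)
Your proposal is correct and follows essentially the same route as the paper: reduce via the Euler product and the identity $E_p(\Lambda)=\lim_{L\to\infty}\Phi_\Lambda(p^L)/p^{L(ks-R)}$ to a local density lower bound, embed the $\Q_p$-solution of \eqref{eq:series} into the $ks$-variable system by putting $x_{1,j}=y_j$ and $x_{i,j}=1$ for $i\ge2$, and then use the invertible $R\times R$ block guaranteed by the rank hypothesis to run a multivariable Hensel lift. The only cosmetic difference is in the packaging of the count: the paper first varies $\bx_{R+1},\ldots,\bx_s$ in a $p^\gamma$-neighborhood (giving $p^{(L-\gamma)k(s-R)}$ choices), solves for $z_1',\ldots,z_R'\in\Z_p$ via Hensel, and then separately counts the $\varphi(p^L)^{R(k-1)}$ ways to realize each $z_j'$ as a $k$-fold product, whereas you lump all $ks-R$ free coordinates together; your stated bound $\Phi_\Lambda(p^L)\ge p^{-2R\delta}p^{L(ks-R)}$ is not quite the shape that falls out (the free coordinates must also stay in a $p$-adic neighborhood of the base point, so the constant depends on $ks-R$ rather than just $R$), but you already flag this bookkeeping as the point requiring care, and any correct version suffices.
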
 

\begin{proof}
Observe that $E_p(\Lambda)=\lim\limits_{L\to \infty} \frac{\Phi_{\Lambda}(p^L)}{p^{L(ks-R)}}$. Since the singular series is absolutely convergent and satisfies $\mathfrak{S}(\Lambda)=\prod_{p} E_p(\Lambda)$, it suffices to prove that for every prime number $p$ there exists a positive number $c_p$ such that
\[
\frac{\Phi_{\Lambda}(p^L)}{p^{L(ks-R)}} \geq c_p \quad \text{for all} \quad L\gg 1.
\]
Since the equation \eqref{eq:series} is homogeneous, we may assume there exists a solution $\alpha_1, \ldots, \alpha_s\in \Z_p$ to \eqref{eq:series} with all $\alpha_j\neq 0$. It follows from a multivariable version of Hensel's Lemma \cite[Theorem 25]{Ku2011} that there exists a $\gamma>0$ such that for any choice of $\alpha'_{R+1},\ldots, \alpha'_s\in \Z_p$ with $\alpha'_i\equiv \alpha_i \pmod {p^{\gamma}}$ there exists a $\alpha'_1,\ldots,\alpha'_R \in \Z_p$ such that $(\alpha'_1,\ldots, \alpha'_s)$ is a solution to \eqref{eq:series}.

Let $z_1, \ldots, z_s\in \Nb$ satisfy $z_j\equiv y_j\pmod {p^\gamma}$, and let $\bz_j=(z_j, 1,\ldots, 1)\in \Nb^k$. Observe that $\langle \bz_j \rangle=z_j$ and 
\[
 \sum\limits_{j=1}^s \lambda_{1,j} \langle \bz_j \rangle^d \equiv \sum\limits_{j=1}^s\lambda_{2,j} \langle \bz_j \rangle^d \equiv\cdots\equiv\sum\limits_{j=1}^s \lambda_{R,j}\langle \bz_j \rangle^d \equiv 0\pmod {p^\gamma}. 
 \]
Let $L>\gamma$, and take $\bx_{R+1}, \ldots, \bx_s \in (\Z/p^L \Z)^{k}$ such that $\bx_j\equiv \bz_j\pmod{{p^\gamma}}$ for all $R+1\leq j\leq s$. There are $p^{(L-\gamma)k(s-R)}$-many choices for $\bx_{R+1}, \ldots, \bx_s$. By construction, we have the relations
 \[
 \sum_{1\leq j\leq R}\lambda_{i,j} z_j^d +\sum_{R+1\leq j\leq s} \lambda_{i,j} \langle \bx_j \rangle^d \equiv 0 \pmod {p^{\gamma}}, \quad 1\leq i\leq R.
 \]
For each $R+1\leq j\leq s$,  we take a lift of $\br{\bx_j}$ to $\Z_p$, and continue to denote the lift by $\br{\bx_j}$. Then there exists $z'_1,\ldots,z'_R\in\Z_p$ such that
 \[ \sum_{1\leq j\leq R}\lambda_{i,j} {z'}_j^d +\sum_{R+1\leq j\leq s} \lambda_{i,j} \langle \bx_j \rangle^d =0 , \quad 1\leq i\leq R.
 \] 
Therefore, any $\bx_1,\ldots, \bx_R\in (\Z/p^L\Z)^k$ that satisfies $\ang{\bx_j}\equiv z'_j \pmod {p^L}$ is a solution to
\[
 \sum\limits_{j=1}^s \lambda_{1,j} \langle \bx_j \rangle^d \equiv \sum\limits_{j=1}^s\lambda_{2,j} \langle \bx_j \rangle^d \equiv\cdots\equiv\sum\limits_{j=1}^s \lambda_{R,j}\langle \bx_j \rangle^d \equiv 0\pmod {p^L}.
\]
Thus $\Phi_{\Lambda}(p^L) \geq \varphi(p^L)^{R(k-1)} p^{(L-\gamma)k(s-R)} $, and from this we immediately see that 
\[\frac{\Phi_{\Lambda}(p^L)}{p^{L(ks-R)}} \geq p^{-\gamma k(s-R)-(k-1)R}.\]
This finishes the proof of the lemma.
\end{proof}
Now, we verify the positivity of the singular series. Recall \eqref{eq:Vk}. As in \cite[Proof of Lemma 4.3]{BlBr2018}, we apply a change of variables to $V_k(\beta)$. Let $\bt' = (t_2, \ldots, t_k)$, and write $v=t_1^d\ldots t_k^d$ so that
\[
V_k(\beta)=\int_{[0, 1]^{k-1}} \ang{\bt'}^{-1}\int_{0}^{\ang{\bt'}^d} d^{-1}v^{(1/d)-1}e(\beta v)\ \dd v \ \dd \bt'.
\]
 If we define
\[
\mathscr{U}(v)=\{\bt'\in [0, 1]^{k-1}: \ang{\bt'}^d\geq v\} \quad \text{and}\quad \psi_k(v)=v^{(1/d)-1} \int_{\mathscr{U}(v)} \ang{\bt'}^{-1} \dd \bt',
\]
we arrive at the expression
\begin{equation}\label{eq:Vknew}
 V_k(\beta)=\frac{1}{d}\int_0^1 \psi_k(v)e(\beta v)\dd v.
\end{equation}
Observe that the change in the order of integration is justified by Tonelli's theorem.
\begin{prop}\label{prop:singint}
 If the equation \eqref{eq:series} has positive real solutions $y_1, \ldots, y_s$, then
$
\mathfrak{I}^{+}(\Lambda)>0.
$
\end{prop}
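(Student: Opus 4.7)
The plan is to interpret $\mathfrak{I}^+(\Lambda)$ via Fourier inversion as an integral over the linear subspace $\{\bv\in\R^s:\Lambda\bv^{\mathrm{T}}=\mathbf{0}\}$ weighted by $\prod_{j=1}^s \psi_k(v_j)$, and then to exhibit an open region of this subspace on which the weight is strictly positive using the rescaled real solution. First I would invoke the homogeneity of \eqref{eq:series}: given a real solution $(y_1,\ldots,y_s)$ with all $y_j>0$, rescaling by a sufficiently small $c>0$ produces $w_j:=(cy_j)^d\in(0,1)$, which still satisfies the (now linear) system $\sum_{j=1}^s \lambda_{i,j}w_j=0$ for $1\le i\le R$ and lies in the open cube $(0,1)^s$.

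Next, substituting the representation \eqref{eq:Vknew} of $V_k$ into $\mathfrak{V}_\Lambda(\boldsymbol{\beta})$ and applying Fubini on the compact region $[0,1]^s$ yields
\[
\mathfrak{V}_\Lambda(\boldsymbol{\beta})=\frac{1}{d^s}\int_{[0,1]^s}\Bigl(\prod_{j=1}^{s}\psi_k(v_j)\Bigr)\,e\bigl(\boldsymbol{\beta}^{\mathrm{T}}\Lambda\bv^{\mathrm{T}}\bigr)\,\dd\bv.
\]
To carry out the $\boldsymbol{\beta}$-integration I would introduce a Gaussian smoothing $e^{-\epsilon\pi|\boldsymbol{\beta}|^2}$ and let $\epsilon\to 0^+$; since $\mathfrak{V}_\Lambda\in L^1(\R^R)$ by Lemma~\ref{lem:singint}, dominated convergence gives $\int_{\R^R}e^{-\epsilon\pi|\boldsymbol{\beta}|^2}\mathfrak{V}_\Lambda(\boldsymbol{\beta})\,\dd\boldsymbol{\beta}\longrightarrow \mathfrak{I}^+(\Lambda)$. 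For each $\epsilon>0$, Fubini allows swapping order of integration, and the inner Gaussian in $\boldsymbol{\beta}$ produces a non-negative kernel $G_\epsilon(\Lambda\bv)$ that serves as an approximate identity for the surface $\delta$-measure on $\{\Lambda\bv^{\mathrm{T}}=\mathbf{0}\}$. Using the invertibility of $\Lambda_1=[C_1,\ldots,C_R]$ (a standing assumption in the paper), this hyperplane is parametrized by $\bv_2:=(v_{R+1},\ldots,v_s)$ via $(v_1,\ldots,v_R)^{\mathrm{T}}=-\Lambda_1^{-1}\Lambda_2\,\bv_2^{\mathrm{T}}$, where $\Lambda_2=[C_{R+1},\ldots,C_s]$, leading to
\[
\mathfrak{I}^+(\Lambda)=\frac{1}{d^s|\det\Lambda_1|}\int_{\Omega}\prod_{j=1}^{s}\psi_k(v_j(\bv_2))\,\dd\bv_2,
\]
where $\Omega=\{\bv_2\in[0,1]^{s-R}:v_1(\bv_2),\ldots,v_R(\bv_2)\in[0,1]\}$.

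Finally, to obtain strict positivity I would use the rescaled real solution: $(w_{R+1},\ldots,w_s)$ lies in the strict interior of $\Omega$ with every $w_j\in(0,1)$, so continuity of the linear parametrization furnishes an open neighborhood $N\subseteq\Omega$ on which every $v_j(\bv_2)\in(0,1)$. On $N$ each factor $\psi_k(v_j)$ is strictly positive: for $v\in(0,1)$ the set $\mathscr{U}(v)\subseteq[0,1]^{k-1}$ contains an open neighborhood of $(1,\ldots,1)$, so its defining integral is positive and $v^{1/d-1}>0$. Hence the integrand is strictly positive on $N$, which has positive Lebesgue measure, yielding $\mathfrak{I}^+(\Lambda)>0$. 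The main obstacle will be making the Fourier-inversion step fully rigorous — specifically the weak convergence of $G_\epsilon(\Lambda\,\cdot)$ against the weight $\prod_j\psi_k(v_j)\cdot\mathbf{1}_{[0,1]^s}$ to the induced surface measure — because $\psi_k$ may be mildly singular at the faces of $[0,1]^s$. I expect this to be handled by splitting off a neighborhood of $\partial[0,1]^s$ whose contribution is controlled via the $L^1$-type bounds underlying Lemma~\ref{lem:singint}, and applying standard Fourier inversion to the complementary bulk region, where $\prod_j \psi_k(v_j)$ is a bounded continuous density.
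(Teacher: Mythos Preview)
Your proposal is correct and follows essentially the same route as the paper: both substitute the representation \eqref{eq:Vknew}, perform Fourier inversion in $\boldsymbol{\beta}$ after the linear change of variables governed by $\Lambda_1$, and arrive at the same formula $\mathfrak{I}^+(\Lambda)=d^{-s}|\det\Lambda_1|^{-1}\int_{\Omega}\prod_j\psi_k(v_j)\,\dd\bv_2$, whose positivity is then deduced from the rescaled real solution lying in the interior of $\Omega$. The only difference is that the paper asserts the Fourier inversion step directly (writing it as ``applying the Fourier inversion theorem $R$ times'' to $B(\bu)$), whereas you propose a Gaussian-smoothing regularization to make that step rigorous; your concern about the mild singularity of $\psi_k$ on $\partial[0,1]^s$ is legitimate and not addressed explicitly in the paper, but your suggested fix is standard and adequate.
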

\begin{proof}
Recall \eqref{eq:Vkprod} and \eqref{Rsingint}. By \eqref{eq:Vknew} we have
\begin{align*}
 \mathfrak{I}^{+}(\Lambda)& =d^{-s}\int_{\R^R}\int_{[0, 1]^s}\psi_k(v_1)\ldots \psi_k( v_s)e\Bigl(\sum_{1\leq j\leq s} L_j(\boldsymbol{\beta})v_j\Bigr) \, \dd \bv \dd \boldsymbol{\beta}.
 \end{align*}
Observe that we can write the linear form inside the complex exponential in the above integral as $\sum\limits_{1\leq i\leq R} R_i(\bv)\beta_i$, where $
 R_i(\bv)= \sum\limits_{1\leq j\leq s}\lambda_{i,j} v_j$. Let $\Lambda_1=[C_1, \ldots , C_R]$. By the assumption written after Lemma~\ref{lem:Aigner}, $\Lambda_1$ is invertible. Apply the change of variables from $v_1,\ldots,v_R$ to $u_1,\ldots,u_R$, where
$u_i=R_i(\bv)$, and leave $v_{R+1},\ldots,v_s$ unchanged. Write $\bu=(u_1,\ldots,u_R)$, $\bv'=(v_{R+1},\ldots,v_s)$ and let $L'_i(\bu,\bv')$, $1\leq i\leq R$ be defined by
\[\left(\begin{array}{c} L'_1(\bu,\bv') \\ \vdots \\ L'_R(\bu,\bv') \end{array}\right)= \Lambda_1^{-1} \left(\begin{array}{c} u_1 - \sum\limits_{R+1\leq j\leq s} \lambda_{1,j} v_j\\ \vdots \\ u_R - \sum\limits_{R+1\leq j\leq s} \lambda_{R,j} v_j\end{array}\right).\]
Note that $L'_i(\bu,\bv')=v_i$ for all $1\leq i\leq R$. We deduce that
\[\mathfrak{I}^{+}(\Lambda)=d^{-s}{\Delta_1}^{-1}\int_{\R^R}\int_{\R^R}B(\bu)e\Bigl(\sum_{1\leq i\leq R} u_i \beta_i\Bigr)d\bu \dd\boldsymbol{\beta},\]
where
\begin{equation}\label{eq:Bu}B(\bu)= \int_{\mathscr{B}(\bu)}\psi_k(L'_1(\bu,\bv'))\cdots \psi_k(L'_R(\bu,\bv')) \psi_k(v_{R+1})\cdots \psi_k(v_s) d\bv' \end{equation}
with
\[\mathscr{B}(\bu)=\left\{ \bv'\in [0,1]^{s-R}: 0\leq L'_i(\bu,\bv')\leq 1 \text{ for all } 1\leq i\leq R\right\}.\]
Applying the Fourier inversion theorem $R$ times, we obtain
\[\mathfrak{I}^{+}(\Lambda)=d^{-s}{\Delta_1}^{-1}B(0,\ldots,0).\]
The existence of positive solutions implies that $\mathscr{B}(0,\ldots,0)$ contains a $(s-R)$-dimensional box of positive volume. Since the integrand in \eqref{eq:Bu} is positive, the proposition follows.
\end{proof}

\section{Asymptotic formulas for box sums}
In this section, we give the asymptotic formulas for the number of positive solutions, all solutions, and primitive solutions of \eqref{eq:equation}, under the hypotheses given in \Cref{thm:boxsum}. 
\subsection{Asymptotic formula for positive solutions}
We first give an asymptotic formula for $M_{\Lambda}^{+}(\bigX)$ under the assumption \eqref{eq:Psize}, which guarantees the disjointness of the major arcs, and then proceed to show that the formula remains valid even when \eqref{eq:Psize} does not hold. Recall \eqref{eq:ML+int}, and that we have $M_\Lambda^+(\bigX)=I(\mathfrak{K})+I(\mathfrak{k})$. Let $E(q,\bigX)$ be the symmetric function in $X_1,\ldots, X_k$ defined by
\[E(q,\bigX)=q^k+\sum_{r=1}^{k-1}q^{k-r}X_1\cdots X_r.\]
Let $\boldsymbol{\alpha}=(\alpha_1,\ldots,\alpha_R)\in \mathfrak{K}$. Write $\alpha_i=\frac{A_i}{q}+\beta_i$, where $1\leq q\leq W$, $(q;A_1;\ldots;A_R)=1$, and $|\beta_i|\leq WP^{-d}$ for all $1\leq i\leq R$. Write $\mathbf{A}=(A_1,\ldots,A_R)$. By \cite[Lemma 3.2]{BlBr2018}, for $1\leq j\leq s$, we have
\[\begin{aligned}
 f(L_j(\balpha))&=f\Bigl(\frac{L_j(\mathbf{A})}{q}+L_j(\bbeta)\Bigr) \\
 &=q^{-k} S(q,L_j(\mathbf{A}))v(L_j(\bbeta))+O\bigl(E(q,\bigX)K^kW^k\bigr),
\end{aligned}\]
where $v(L_j(\boldsymbol{\beta}))$ is as defined in \eqref{eq:vk}.
By the trivial upper bounds $q^{-k} S(q,L_j(\mathbf{A}))v(L_j(\bbeta))\ll P$ and $f(L_j(\balpha))\ll P$, we deduce
\[F(\balpha)=q^{-sk}\prod_{j=1}^s S(q,L_j(\mathbf{A}))v(L_j(\bbeta))+O\bigl(P^{s-1} E(q,\bigX)K^kW^k\bigr).\]
Observe that $E(q,\bigX)\ll W^k P X_k^{-1}$ whenever $q\leq W$, and that the measure of $\mathfrak{K}$ is $O(W^{2R+1} P^{-Rd})$. Upon recalling \eqref{def:majorarc}, \eqref{def:IKk}, and \eqref{Rsingsertrun}, we obtain
\[\begin{aligned}I(\mathfrak{K})&=\mathfrak{S}(\Lambda,W)\int_{[-WP^{-d},WP^{-d}]^R} \prod_{j=1}^s v(L_j(\bbeta)) \, d\bbeta + O(K^k W^{2R+2k+1}X_k^{-1}P^{s-Rd}) \\
&=\mathfrak{S}(\Lambda,W)\mathfrak{I}^{+}(\Lambda,W)P^{s-Rd}+O\Bigl(K^{2R+3k+1} U^{(2R+2k+1)s}X_k^{-1}P^{s-Rd}\Bigr).
\end{aligned}\]
Recall from \eqref{eq:omegaRUdef} that $U^s=X_k^{k\omega_R}$. By Corollary~\ref{cor:truncprod}, we deduce
\[I(\mathfrak{K})=\mathfrak{S}(\Lambda)\mathfrak{I}^{+}(\Lambda)P^{s-Rd}+O\Bigl(K^{n_0+1+3k+2R} P^{s-Rd} X_k^{\sigma}\Bigr),\]
where
\[\sigma=\max\left(-\frac{k\omega_R}{2d},k\omega_R(2R+2k+1)-1\right).\]
Since
\[0<k\omega_R(2R+2k+1)= \frac{k(2R+2k+1)}{(8dk)^{8}R}\leq \frac{5}{2^{24}d^8k^6}<\frac{1}{2^{21}},\]
we have $-1<\sigma<0$. Therefore, together with Corollary~\ref{cor:Rminor},
we conclude that there exists $0<\delta<1$ such that
\[
M_{\Lambda}^{+}(\bigX)=I(\mathfrak{K})+I(\mathfrak{k})=\mathfrak{S}(\Lambda)\mathfrak{I}^{+}(\Lambda)P^{s-Rd}+O(K^{n_0+1+3k+2R} P^{s-Rd} X_k^{-\delta}).\]
Next, assume that \eqref{eq:Psize} does not hold. In this case, we have $P\ll K^{\frac{3}{d}(1-\frac{3}{2^{24}})^{-1}},$ 
thus 
\[K^{3(1-\frac{3}{2^{24}})^{-1}(R+1)}P^{s-Rd}X_k^{-\delta}\gg K^{3(1-\frac{3}{2^{24}})^{-1}(R+1)} P^{s-(R+1)d} \gg P^s.\]
Together with Lemma~\ref{lem:singser} and Lemma~\ref{lem:singint}, we conclude that for \begin{equation}\label{Adef}A=\max\Bigl(n_0+1+3k+2R,3(1-\frac{3}{2^{24}})^{-1}(R+1)\Bigr),\end{equation}
we have 
\begin{equation}\label{Rposasymp}
M_{\Lambda}^{+}(\bigX)=\mathfrak{S}(\Lambda)\mathfrak{I}^{+}(\Lambda)P^{s-Rd}+O(K^{A} P^{s-Rd} X_k^{-\delta}).\end{equation}
This completes the proof.
\subsection{Asymptotic formula for all solutions}
We now complete the proof of Theorem~\ref{thm:boxsum}. Let $C^+_\Lambda=\mathfrak{S}(\Lambda)\mathfrak{I}^{+}(\Lambda)$. By \eqref{Rposasymp}, we have
\[
M^+_\Lambda(\bigX)=C^+_\Lambda P^{s-Rd}+E^+_\Lambda
\]
where $E^+_\Lambda\ll K^{A} P^{s-Rd} X_k^{-\delta}$ with $A$ as defined in \eqref{Adef}. For $\boldsymbol{\eta}=(\eta_j)_{ \substack{ 1\leq j\leq s}}$, let $\boldsymbol{\eta}\Lambda$ denote the matrix
 \[
 \boldsymbol{\eta}\Lambda=(\eta_j\lambda_{i,j})_{\substack{1\leq i\leq R\\ 1\leq j\leq s}}.
 \] 
We can apply the combinatorial arguments from \cite[Section 4.4]{BlBr2018} with minor modifications to deduce that
\[ M_{\Lambda}(\bigX)=2^{ks}M^+_{\Lambda}(\bigX)\]
if $d$ is even, and 
\[ M_{\Lambda}(\bigX)=2^{(k-1)s}\sum_{\substack{\eta_j\in \{\pm 1\} \\1\leq j\leq s
 }} M^{+}_{\boldsymbol{\eta}\Lambda}(\bigX)\]
 if $d$ is odd.
Therefore, we can write
\[ M_{\Lambda}(\bigX)=C_\Lambda P^{s-Rd}+E_\Lambda,\]
where $C_\Lambda$ equals either \[2^{ks}C^+_\Lambda \quad \text{or} \quad 2^{(k-1)s} \mathfrak{S}(\Lambda)\sum_{\substack{\eta_j\in \{\pm 1\}\\1\leq j\leq s}} \mathfrak{I}^+(\boldsymbol{\eta}
 \Lambda)\] depending on whether $d$ is even or odd, respectively, and
$E_\Lambda\ll K^{A} P^{s-Rd} X_k^{-\delta}$. From this, we immediately see that $C_{\Lambda}>0$ if the system of equations \eqref{eq:series} has solutions in $\R$ and in $\Q_p$ for all primes $p$, with $y_j\neq 0$ for all $1\leq j\leq s$. Finally, to remove the dependence on the parity of $d$, let
\begin{equation}\label{eq:Sintegral}
 \mathfrak{I}(\Lambda)= \int_{\R^R}\int_{[-1, 1]^{ks}} e\Bigl(L_1(\boldsymbol{\beta})\br{\bt_1}^d+\cdots +L_s(\boldsymbol{\beta})\br{\bt_s}^d\Bigr) d\bt \dd\boldsymbol{\beta},
 \end{equation}
where $\bt_j=(t_{1,j},\ldots,t_{k,j})$. A direct computation shows that
$\mathfrak{I}(\Lambda)$ equals $2^{ks}\mathfrak{I}^+(\Lambda)$ if $d$ is even, and equals \[2^{(k-1)s} \sum\limits_{\substack{\eta_j\in \{\pm 1\}\\1\leq j\leq s}}\mathfrak{I}^+(\boldsymbol{\eta}
 \Lambda)\] if $d$ is odd.
We conclude that \eqref{eq:boxsum} holds with
\begin{equation}\label{eq:boxsumconst}
 C_{\Lambda}=\mathfrak{S}(\Lambda)\mathfrak{I}(\Lambda),
 \end{equation}
 with $\mathfrak{S}(\Lambda)$ and $\mathfrak{I}(\Lambda)$ given by \eqref{Rsingser} and \eqref{eq:Sintegral}, respectively.

\subsection{Asymptotic formula for primitive solutions}\label{sec:priasymp}
Let $M_\Lambda^*(\bigX)$ denote the number of integral solutions to \eqref{eq:equation} satisfying \eqref{eq:boxbound}, with the additional constraint
\[(x_{i,1};\ldots;x_{i,s})=1, \quad 1\leq i\leq k.\]
Following the treatment from \cite[Section 4.5]{BlBr2018} with minor modifications, it is straightforward to check that there exists $\delta>0$ such that 
\begin{equation}\label{eq:priasymp} 
M^*_{\Lambda}(\mathbf{X})=\zeta(s - Rd)^{-k} C_\Lambda\br{\bigX}^{s-Rd} +O\bigl(K^{A} \br{\bigX}^{s-Rd} (\min X_i)^{-\delta}\bigr),
\end{equation}
where $A$ is the number defined in \eqref{Adef}.

\section{Asymptotic formula for hyperbolic sum}
To conclude this paper, we deduce the asymptotic formula for $N(B)$ and finish the proof of Theorem~\ref{thm:hypsum}. For $\boldsymbol{m}=(m_1, \ldots, m_k)\in \mathbb{N}^k$, define
\[
\theta_\Lambda(\boldsymbol{m})=\#\{(\mathbf{x}_1, \ldots, \mathbf{x}_k)\in X_0(\Q): \text{ $\mathbf{x}_i\in \Z^s$ primitive, $\abs{\mathbf{x}_i}=m_i$, $x_{i, j} \neq 0$ \ $\forall \, 1\leq i\leq k$, $1\leq j\leq s$} \}.
\]
 Recall \eqref{eq:upsilon}. We have
\begin{equation}\label{eq:NBdef}
N(B) = \frac{1}{2^k} \sum_{m_1 m_2 \cdots m_k \leq B^{1/(s-Rd)}} \theta_\Lambda(\mathbf{m})=\frac{1}{2^k} \Upsilon(B^{1/(s-Rd)}).
\end{equation}
To apply Theorem~\ref{thm:BlBrhyp}, we shall show that $\theta_\Lambda(\mathbf{m})$ belongs to the $(\alpha, c, D, \nu, \delta)$-family of arithmetic functions, described in Section~\ref{sec:hypmethod}, for a suitable choice of parameters. Observe that for any $\bigX\in \mathbb{N}^k$ we have
\[\sum_{\mathbf{m}\leq \bigX} \theta_\Lambda(\mathbf{m})=M_\Lambda^*(\bigX).\]
By \eqref{eq:priasymp}, we see that $\theta_\Lambda(\mathbf{m})$ satisfies the condition (I) with $\alpha=s-Rd$, $c=\zeta(s - Rd)^{-k} C_\Lambda$, and for some $0<\delta<1$. \par
Next, we check condition (II). Fix $r$ with $1\leq r\leq k-1$ and put $l=k-r$. For $\bu\in \Nb^r$ and $\bigV\in [1,\infty)^l$, let
\begin{equation}\label{eq:Thetauv} \Theta_{\bu}(\bigV)=\sum_{\bv\leq \bigV} \theta_\Lambda(\bu,\bv).
\end{equation}
Write \eqref{eq:equation} as
 \begin{equation}\label{eq:equationyz}
 \sum\limits_{j=1}^s \lambda_{i,j} (y_{1,j}\cdots y_{r,j}z_{1,j}\cdots z_{l,j})^d=0, \quad 1\leq i \leq R.
 \end{equation}
Observe that $\theta_\Lambda(\bu,\bv)$ is the number of solutions to \eqref{eq:equationyz} in primitive vectors $\mathbf{y}_i, \mathbf{z}_{i'} \in \mathbb{Z}^s$ satisfying $\abs{\by_i}=u_i$ and $\abs{\bz_{i'}}=v_{i'}$ for all $1\leq i\leq r$ and $1\leq i'\leq l$. To evaluate the sum \eqref{eq:Thetauv}, we sum over $\bv\leq \bigV$ for each $r$-tuple $(\by_1,\ldots,\by_r)$ in a permissible set defined as follows:
\[\mathscr{Y}(\bu)=\{(\by_1,\ldots,\by_r): \by_i\in \Z^s \text{ primitive, } \abs{\by_i}=u_i, \, y_{i, j}\neq 0  \text{ for all }1\leq i\leq r, \, 1\leq j\leq s\}.\]
For $(\by_1,\ldots,\by_r)\in \mathscr{Y}(\bu)$, let
\begin{equation}
\label{eq:Lambday}
\Lambda(\by_1,\ldots,\by_r) =\left(\begin{array}{cccc}
 \lambda_{1,1} (y_{1,1}\cdots y_{r,1})^d & \lambda_{1,2} (y_{1,2}\cdots y_{r,2})^d & \cdots &\lambda_{1,s} (y_{1,s}\cdots y_{r,s})^d \\
 \lambda_{2,1} (y_{1,1}\cdots y_{r,1})^d & \lambda_{2,2}(y_{1,2}\cdots y_{r,2})^d & \cdots &\lambda_{2,s} (y_{1,s}\cdots y_{r,s})^d \\
 \vdots & \vdots & \ddots& \vdots \\
 \lambda_{R,1} (y_{1,1}\cdots y_{r,1})^d & \lambda_{R,2} (y_{1,2}\cdots y_{r,2})^d & \cdots & \lambda_{R,s} (y_{1,s}\cdots y_{r,s})^d
\end{array}\right).
\end{equation}
We find that
\[\Theta_{\bu}(\bigV)=\sum\limits_{(\by_1,\ldots,\by_r)\in\mathscr{Y}(\bu)}M^*_{\Lambda(\by_1,\ldots,\by_r)}(\bigV).\]
Let $K(\by_1,\ldots,\by_r)$ be the value defined by \eqref{Kdef} associated to the matrix $\Lambda(\by_1,\ldots,\by_r)$. Observe that $\Lambda(\by_1,\ldots,\by_r)$ is obtained from $\Lambda$ simply by multiplying $(y_{1,j}\cdots y_{r,j})^d$ to each $j$-th column. Hence it straightforward to see that $\Lambda(\by_1,\ldots,\by_r)$ satisfies the rank hypothesis in Theorem~\ref{thm:boxsum}, and that $K(\by_1,\ldots,\by_r)$ satisfies
\begin{equation}\label{Ky} K(\by_1,\ldots,\by_r)\ll \br{\bu}^{Rd}K,\end{equation}
because 
\[K(\by_1,\ldots,\by_r)\leq\max(\br{\bu}^{Rd}\Delta, R\br{\bu}^{d}m_1, R\br{\bu}^{(R-1)d}m_2).\]
Recall the definition \eqref{Adef} of $A$. By \eqref{eq:priasymp} we have
\[\Theta_{\bu}(\bigV)=\zeta(s - Rd)^{-l} \br{\bigV}^{s-Rd} \sum\limits_{(\by_1,\ldots,\by_r)\in\mathscr{Y}(\bu)} C_{\Lambda(\by_1,\ldots,\by_r)}+E, \]
where
\[E\ll \br{\bigV}^{s-Rd} (\min V_i)^{-\delta} \sum\limits_{(\by_1,\ldots,\by_r)\in\mathscr{Y}(\bu)} (K(\by_1,\ldots,\by_r))^A. \]
The number of elements of $\mathscr{Y}(\bu)$ is $O(\br{\bu}^{s-1})$.
 By \eqref{Ky}, we deduce that
\[E\ll \br{\bigV}^{s-Rd} (\min V_i)^{-\delta} K^{A} \br{\bu}^{s-1+RdA}.\]
Since $\br{\bu}\ll \abs{\bu}^r$ and $1\leq r\leq k-1$, we conclude that $\theta_\Lambda(\mathbf{m})$ satisfies condition (II) with $\alpha$, $c$ same as before, 
\[
c_r(\mathbf{u}):= \zeta(s - Rd)^{-l} \sum\limits_{(\by_1,\ldots,\by_r)\in\mathscr{Y}(\bu)} C_{\Lambda(\by_1,\ldots,\by_r),}
\] $\nu=1$, 
$D=(k-1)(s-1)+(k-1)RdA$, 
and $\delta$ a sufficiently small positive number. 
Finally, it is straightforward to verify that $\theta_{\Lambda}(\mathbf{m})$ satisfies condition (III), by the symmetry of the equation \eqref{eq:equation}. We conclude that $\theta_{\Lambda}(\mathbf{m})$ belongs to the $(s-Rd,\zeta(s - Rd)^{-k} C_\Lambda,D,1,\delta)$-family, where $D$ is as defined above and $\delta$ is a small positive number. Therefore, the conclusion of Theorem~\ref{thm:hypsum} follows from Theorem~\ref{thm:BlBrhyp} and \eqref{eq:NBdef}, with
\[C=\frac{C_\Lambda}{2^k (k-1)!\zeta(s-Rd)^k}.\]
 Furthermore, from \eqref{eq:boxsumconst}, we see that $C$ is positive if and only if \eqref{eq:equation} has solutions in $\R$ and in $\Q_p$ for every prime $p$, with all $x_{i,j}\neq 0$. This completes the proof of Theorem~\ref{thm:hypsum}.
\subsection{A remark on uniformity}\label{sec:uniformity}
We end this paper with a quick proof of the uniformity statement given in Remark~\ref{rem:uniformity}. Let $K_0>0$ and let $\mathscr{H}_{K_0}$ be the family of systems of equations whose coefficient matrices $\Lambda$ satisfy $K(\Lambda)\leq K_0$. Then it follows from Lemma~\ref{lem:singser}, Lemma~\ref{lem:singint}, and \eqref{eq:boxsumconst} that there exists a positive constant $B>0$ such that the inequality $C_\Lambda \leq B K_0^{\frac{n_0+1}{d}+R-1}$ holds for every such $\Lambda$. We deduce that if $\Lambda\in \mathscr{H}_{K_0}$ then $\theta_{\Lambda}(\mathbf{m})$ is in the $(s-Rd,B K_0^{\frac{n_0+1}{d}+R-1},D,1,\delta)$-family. The uniformity statement follows immediately, because the asymptotic formula in Theorem~\ref{thm:BlBrhyp} holds uniformly across the family of arithmetic functions.

\bibliographystyle{amsplain}
\bibliography{hyperbolic}

\end{document}